\newcommand{\aut}[1][n]{B_{#1}}
\newcommand{\syl}[1][n]{G_{#1}}
\newcommand{\m}{^{-1}}
\newcommand{\rest}[1][n]{_{(#1)}}
\newcommand{\ItWr}[2]{\stackrel{#2}{ \underset{\text{\it #1=1}}{\wr }}}
\newif\ifbrief
\newtheorem{theorem}{Theorem}
\newtheorem{proposition}[theorem]{Proposition}
\newtheorem{corollary}[theorem]{Corollary}
\newtheorem{lemma}[theorem]{Lemma}
\newtheorem{example}{Example}
\newtheorem{remark}[theorem]{Remark}
\theoremstyle{definition}
\newtheorem{definition}{Definition}
\newtheorem{statment}{Statement}
\begin{document}
\title{The commutator subgroup of Sylow 2-subgroups of alternating group, commutator width of wreath product }
\author{Ruslan Skuratovskii} 
\date{\empty}
\maketitle



  \begin{abstract}
  We construct the minimal generating set of  the commutator subgroup of Sylow 2-subgroup of alternating group. Inclusion problem \cite{Lin} for $Sy{{l}_{2}}{{A}_{{{2}^{k}}}}$ and its subgroups as $\left( Sy{{l}_{2}}{{A}_{{{2}^{k}}}} \right)'$ and $\left( Sy{{l}_{2}}{{A}_{{{2}^{k}}}} \right)''$ is investigated by us. Relation between solving of inclusion problem of and conjugacy search problem \cite{Ushak} in this group is justified by us.
The minimal generating set for the commutator subgroup of Sylow 2-subgroups of alternating group ${A_{{2^{k}}}}$ was constructed in form of wreath recursion.

The size of such minimal generating set is found. The structure of commutator subgroup of Sylow 2-subgroups of the alternating group ${A_{{2^{k}}}}$ is investigated.


It is shown that $(Syl_2 A_{2^k})^2 = Syl'_2 A_{2^k}, \, k>2$.


      The commutator width of direct limit of wreath product of cyclic groups is found.
  This paper presents upper bounds of the commutator width $(cw(G))$ \cite {Mur} of a wreath product of groups.

A new approach to presentation of Sylow 2-subgroups of the alternating group ${A_{{2^{k}}}}$ is applied.
As a result the short proof that the commutator width of Sylow 2-subgroups of alternating group ${A_{{2^{k}}}}$, permutation group ${S_{{2^{k}}}}$ and Sylow $p$-subgroups of $Syl_2 A_{p^k}$ ($Syl_2 S_{p^k}$) are equal to 1 is obtained.

 An upper bound of the commutator width of permutational wreath product $B \wr C_n$ for an arbitrary group $B$  is found.




Key words: wreath product of groups, minimal generating set of the commutator subgroup of Sylow $2$-subgroups, commutator width of wreath product, commutator width of Sylow $p$-subgroups, commutator  subgroup of alternating group.\\
\textbf{Mathematics Subject Classification}: 20B27, 20B22, 20F65, 20B07, 20E45.
  \end{abstract}


  \begin{section}{Introduction}
  The first example of a group $G$ with $cw(G) > 1$ was given by
Fite \cite{Fite}.  The smallest finite examples of such groups are groups of order 96, there's two of them, nonisomorphic to each other, were given by Guralnick \cite{Gural_2010}. 

  We deduce an estimation for commutator width of wreath product of groups $C_n \wr B$ taking in consideration a $cw(B)$ of passive group $B$.

The form of commutator presentation \cite{Meld} is proposed by us as wreath recursion \cite{Lav} and commutator width of it was studied. We impose more weak condition on the presentation of wreath product commutator then it was imposed by J. Meldrum.

In this paper we continue a researches which was stared in \cite{SkIrred, SkAr}. We find a minimal generating set and the structure for commutator subgroup of $Sy{l_{2}}{{A}_{{{2}^{k}}}}$.

A research of commutator-group
 serve to decision  of inclusion problem \cite{Lin} for elements of $Syl_2 {A_{{2^{k}}}}$ in its derived subgroup $(Syl_2 {A_{{2^{k}}}})'$.
It was known that, the commutator width of iterated wreath products of nonabelian finite simple groups is bounded by an absolute constant \cite {nikolov, Fite}. But it was not proven that commutator subgroup of $\underset{i=1}{\overset{k}{\mathop{\wr }}}\,{{\mathcal{C}}_{{{p}_{i}}}}$ consists of commutators. We generalize the passive group of this wreath product to any group $B$ instead of only wreath product of cyclic groups and obtain an exact commutator width.

 Also we are going to prove that the commutator width of Sylows $p$-subgroups of symmetric and alternating groups $p \geq 2$ is 1.

  \end{section}

  \begin{section}{Preliminaries }





Let $G$ be a group acting (from the right) by permutations on
a set $X$ and let $H$ be an arbitrary group.
Then the (permutational) wreath product
$H \wr G$ is the semidirect product $H^X \leftthreetimes G $, 
 where $G$ acts on the direct power $H^X$ by
the respective permutations of the direct factors.
The group $C_p$ or $(C_p, X)$ is equipped with a natural action by the left shift on $X =\{1,…,p\}$, $p\in \mathbb{N}$.
As well known that a wreath product of permutation groups is associative construction.

The multiplication rule of automorphisms $g$, $h$ which presented in form of the wreath recursion \cite{Ne}
$g=(g\rest[1],g\rest[2],\ldots,g\rest[d])\sigma_g, \
h=(h\rest[1],h\rest[2],\ldots,h\rest[d])\sigma_h,$ is given by the formula:
$$g\cdot h=(g\rest[1]h\rest[\sigma_g(1)],g\rest[2]h\rest[\sigma_g(2)],\ldots,g\rest[d]h\rest[\sigma_g(d)])\sigma_g \sigma_h.$$

We define $\sigma$ as $(1,2,\ldots, p)$ where $p$ is defined by context.

 The set $X^*$ is naturally a vertex set of a regular rooted tree, i.e. a connected graph without cycles
and a designated vertex $v_0$ called the root, in which two words are connected by an edge if and only if they are of form $v$ and $vx$, where $v\in X^*$, $x\in X$.
The set $X^n \subset X^*$ is called the $n$-th level of the tree $X^*$
and $X^0 = \{v_0\}$. We denote by $v_{ji}$ the vertex of $X^j$, which has the number $i$.
Note that the unique vertex $v_{k,i}$ corresponds to the unique word $v$ in alphabet $X$.
For every automorphism $g\in Aut{{X}^{*}}$ and every word $v \in X^{*}$  define the section (state) $g_{(v)} \in AutX^{*}$ of $g$ at $v$ by the rule: $g_{(v)}(x) = y$ for $x, y \in X^*$  if and only if $g(vx) = g(v)y$.
The subtree of $X^{*}$ induced by the set of vertices $\cup_{i=0}^k X^i$ is denoted by $X^{[k]}$.
 The restriction of the action of an automorphism $g\in AutX^*$ to the subtree $X^{[l]}$ is denoted by $g_{(v)}|_{X^{[l]}}$.
 A restriction $g_{(v_{ij})}|_{X^{[1]}} $ is called the vertex permutation (v.p.) of $g$ in a vertex $v_{ij}$ and denoted by $g_{ij}$.
We call the endomorphism $\alpha|_{v} $ restriction of $g$ in a vertex $v$ \cite{Ne}. For example, if $|X| = 2$ then we just have to distinguish active vertices, i.e., the
vertices for which $\alpha|_{v} $ is non-trivial.

Let us label every vertex of ${{X}^{l}},\,\,\,0\le l<k$ by sign 0 or 1 in relation to state of v.p. in it. 
Obtained by such way a vertex-labeled regular tree is an element of $Aut{{X}^{[k]}}$.
All undeclared terms are from \cite{Sam, Gr}.

Let us make some notations.
For brevity, in form of wreath recursion we write a commutator as $[a,b]=ab{{a}^{-1}}{{b}^{-1}}$ that is inverse to ${{a}^{-1}}{{b}^{-1}}ab$. That does not reduce the generality of our reasoning.
Since for convenience the commutator of two group elements $a$ and $b$ is denoted by
$
[a,b] = aba\m b\m,
$
 conjugation by an element $b$ as
$
a^b = bab\m.
$

We define $G_k$ and $B_k$ recursively i.e.
\begin{align*}
B_1 &= C_2, \, B_k = B_{k-1} \wr C_2  \mbox{ for $k>1$},&\\
G_1 &= \langle e \rangle, \, G_k =  \{(g_1, g_2)\pi \in B_{k} \mid g_1g_2 \in G_{k-1} \} \mbox{ for $k>1$.}&
\end{align*}

Note that $B_k = \ItWr{i}{k} C_2 $.


We denoted by \emph{clG(g)} the commutator length of an element $g$ of the derived
subgroup of a group $G$ is the minimal $n$ such that there
exist elements $x_1, \ldots , x_n, y_1, \ldots , y_n$ in G such that $g = [x_1, y_1] \ldots [x_n, y_n]$.
The commutator length of the identity element is 0. The commutator width
of a group $G$, denoted $cw(G)$, is the maximum of the commutator lengths
of the elements of its derived subgroup $[G,G]$.
The minimal number of generators of the group $G$ is denoted by $d(G)$.

  \end{section}

  \begin{section}{Commutator width of Sylow 2-subgroups of $A_{2^k}$ and $S_{2^k}$ }

The following Lemma imposes the Corollary 4.9 of \cite{Meld} and it will be deduced from the corollary 4.9 with using in presentation elements in the form of wreath recursion.
\begin{lemma}\label{form of comm} An element of form
$(r_1, \ldots, r_{p-1}, r_p) \in W'= (B \wr C_p)'$ iff product of all $r_i$ (in any order) belongs to $B'$, where $p\in \mathrm{N}$, $p\geq2$.
\end{lemma}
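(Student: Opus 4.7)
The plan is to combine an abelianization argument for the forward direction with an explicit telescoping commutator for the converse. First I note that $W/B^p\cong C_p$ is abelian, so $W'\subseteq B^p$, matching the shape of the element in the statement.

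For $(\Rightarrow)$ I would define $\phi\colon W\to B^{\mathrm{ab}}\times C_p$ by $\phi\bigl((b_1,\ldots,b_p)\sigma^j\bigr)=(\overline{b_1 b_2\cdots b_p},\,j)$, where the bar denotes reduction modulo $B'$. A direct check with the wreath-product multiplication rule recalled in the Preliminaries shows $\phi$ is a homomorphism: permuting the second-factor entries by $\sigma_g$ before multiplying leaves the image unchanged in $B^{\mathrm{ab}}$. Since the target is abelian, $W'\subseteq\ker\phi$, and $\ker\phi$ is precisely the set of tuples whose product lies in $B'$. The "in any order" clause is automatic, since reordering a product in $B$ only modifies it by an element of $B'$.

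For $(\Leftarrow)$, given $(r_1,\ldots,r_p)$ with $c:=r_1\cdots r_p\in B'$, I would split
\[
(r_1,\ldots,r_{p-1},r_p)=(r_1,\ldots,r_{p-1},r_pc^{-1})\cdot(1,\ldots,1,c).
\]
The second factor lies in $W'$ because, writing $c=\prod_i[a_i,b_i]$ in $B$, the identity
\[
\bigl[(1,\ldots,1,a_i),\,(1,\ldots,1,b_i)\bigr]=(1,\ldots,1,[a_i,b_i])
\]
(both factors have trivial permutation part, so commutation is componentwise) produces $(1,\ldots,1,c)$ as a product of commutators in $W$. The first factor, whose components now multiply to $1$ in $B$, is a \emph{single} commutator: setting $s_i:=r_i r_{i+1}\cdots r_{p-1}$ for $1\le i<p$ and $s_p:=1$, a computation with the multiplication rule gives
\[
\bigl[(s_1,\ldots,s_p),\,\sigma\bigr]=(s_1 s_2^{-1},\,s_2 s_3^{-1},\,\ldots,\,s_{p-1}s_p^{-1},\,s_p s_1^{-1})=(r_1,\ldots,r_{p-1},r_p c^{-1}),
\]
where the last coordinate matches because $s_p s_1^{-1}=(r_1\cdots r_{p-1})^{-1}=r_p c^{-1}$.

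The main obstacle will be verifying this telescoping commutator identity without index errors, in particular tracking how the action of $\sigma$ shifts the lower indices of the second factor in the wreath-recursion product. Once that is in hand, combining both factors expresses an arbitrary tuple with product in $B'$ as a product of commutators in $W$, completing the equivalence.
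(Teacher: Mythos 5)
Your proof is correct, and it takes a genuinely different route from the paper's. For necessity, the paper follows Meldrum: it expands each component of a generic element of $W'$ as $r_i=h_i g_{a(i)} h_{ab(i)}^{-1} g_{aba^{-1}(i)}^{-1}$, argues that the product over $i$ rearranges modulo $B'$ into a product of commutators, and then checks closure of this set of forms under multiplication separately; you instead exhibit the abelianization map $\phi\colon W\to B^{\mathrm{ab}}\times C_p$ and observe $W'\subseteq\ker\phi$, which gives the trivial permutation part, the product condition, the order-independence, and closure all at once with no rearrangement bookkeeping. For sufficiency, the paper generates the required elements from tuples of the form $(r_1,e,\ldots,e,r_1^{-1}),\ldots,(e,\ldots,e,r_1r_2\cdots r_{p-1})$ and argues rather briefly that these suffice; your factorization
\[
(r_1,\ldots,r_{p-1},r_p)=\bigl[(s_1,\ldots,s_p),\sigma\bigr]\cdot(1,\ldots,1,c),\qquad s_i=r_i\cdots r_{p-1},\ s_p=1,\ c=r_1\cdots r_p,
\]
is explicit and verifiably correct under the paper's multiplication rule ($s_is_{i+1}^{-1}=r_i$ and $s_ps_1^{-1}=(r_1\cdots r_{p-1})^{-1}=r_pc^{-1}$). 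What your version buys is economy and a quantitative bonus: it already shows every such element is a product of one commutator and $cw(B)$ last-coordinate commutators, which is essentially what the paper only extracts later in Lemma~\ref{c_p_wr_b_elem_repr} and Lemma~\ref{comm B_k}; what the paper's version buys is that the explicit Meldrum component form of a commutator is kept on hand for reuse in those subsequent lemmas.
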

\begin{proof}
More details of our argument may be given as follows.
\begin{eqnarray*}
w=(r_1, r_2, \ldots, r_{p-1},  r_p),
\end{eqnarray*}
where $r_i\in B$.
If we multiply elements from a tuple $(r_1, \ldots, r_{p-1}, r_p)$, where $r_i={{h}_{i}}{{g}_{a(i)}}h_{ab(i)}^{-1}g_{ab{{a}^{-1}}(i)}^{-1}$, $h, \, g \in B$ and $a,b \in C_p$, then we get a product
\begin{equation} \label{Meld} 
 x=\stackrel{p}{ \underset{\text{\it i=1}} \prod} r_i = \prod\limits_{i=1}^{p}{{{h}_{i}}{{g}_{a(i)}}h_{ab(i)}^{-1}g_{ab{{a}^{-1}}(i)}^{-1} \in B'},
\end{equation}
where $x $ is a product of corespondent commutators.
Therefore, we can write $r_p = r_{p-1}\m \ldots r_1\m x$. We can rewrite element $x\in B'$ as the product $x = \prod \limits ^{m}_{j=1} [f_j,g_j]$,  $m \le cw(B)$.

Note that we impose more weak condition on the product of all $r_i$ to belongs to $B'$ then in Definition 4.5. of form $P(L)$ in \cite{Meld}, where the product of all $r_i$ belongs to a subgroup $L$ of $B$ such that $ L>B'$.

 In more detail deducing of our representation constructing can be reported in following way.
 If we multiply elements having form of a tuple $(r_1, \ldots, r_{p-1}, r_p)$, where  $r_i={{h}_{i}}{{g}_{a(i)}}h_{ab(i)}^{-1}g_{ab{{a}^{-1}}(i)}^{-1}$, $h, \, g \in B$ and $a,b \in C_p$, then in case $cw(B)=0$ we obtain a product
\begin{equation}\label{Meld2}
 \stackrel{p}{ \underset{\text{\it i=1}} \prod} r_i = \prod\limits_{i=1}^{p}{{{h}_{i}}{{g}_{a(i)}}h_{ab(i)}^{-1}g_{ab{{a}^{-1}}(i)}^{-1} \in B'}.
\end{equation}

Note that if we rearrange elements in (1) as $h_{1} h_{1}^{-1} g_{1}g_2^{-1}h_{2} h_{2}^{-1} g_{1}g_2^{-1} ...  h_{p} h_{p}^{-1} g_{p}g_p^{-1}$ then by the reason of such permutations we obtain a product of corespondent commutators. 
Therefore, following equality holds true

\begin{equation}\label{HH}
\prod\limits_{i=1}^{p}{{{h}_{i}}{{g}_{a(i)}}h_{ab(i)}^{-1}g_{ab{{a}^{-1}}(i)}^{-1} } =\prod\limits_{i=1}^{p}h_{i} g_{i} h_{i}^{-1} g_i^{-1}x_0 =\prod\limits_{i=1}^{p}h_{i} h_{i}^{-1} g_{i}g_i^{-1}x \in B',
\end{equation}
where $x_0,  x$ are a products of corespondent commutators.
Therefore,
\begin{eqnarray} \label{form}
(r_1, \ldots, r_{p-1}, r_p) \in W' \mbox{ iff } r_{p-1} \cdot \ldots \cdot r_{1} \cdot r_p = x\in B'.
\end{eqnarray}
 Thus, one element from states of wreath recursion $(r_1, \ldots, r_{p-1}, r_p) $ depends on rest of $r_i$. This dependence contribute that the product $\prod\limits_{j=1}^{p}r_{j}$ for an arbitrary sequence $\{ r_{j} \}_{j=1} ^{p}$
 belongs to  $B'$. Thus, $r_p$ can be expressed as:
\begin{eqnarray*}
r_p = r_{1}\m \cdot \ldots \cdot r_{p-1}\m x.
\end{eqnarray*}

Denote a $j$-th tuple, which consists of a wreath recursion elements, by $(r_{{j}_1},r_{{j}_2},..., r_{{j}_p} )$.
Closedness by multiplication of the set of forms $(r_1, \ldots, r_{p-1}, r_p) \in W= (B \wr C_p)'$
  follows from


\begin{eqnarray} \label{prod}
  \prod\limits_{j=1}^{k}  ( r_{j1} \ldots r_{j{p-1}} r_{jp})= \prod\limits_{j=1}^{k} \prod\limits_{i=1}^{p}  r_{j_i} =  R_1 R_2 ...  R_{k} \in B ',
\end{eqnarray}

  where $r_{ji}$ is $i$-th element from the tuple number $j$,  $R_j = \prod\limits_{i=1}^{p}  r_{ji}, \,\, \, 1 \leq j \leq  k$. As it was shown above $R_j = \prod\limits_{i=1}^{p-1}  r_{ji} \in B'$. Therefore, the product (\ref{prod}) of $R_j$, $j \in \{1,...,k \}$ which is similar to the product mentioned in \cite{Meld}, has the property $R_1 R_2 ...  R_{k} \in B '$ too, because of $B '$ is subgroup.
   Thus, we get a product of form (\ref{Meld}) and the similar reasoning as above are applicable.

Let us prove the sufficiency condition. 
If the set $K$ of elements satisfying the condition of this theorem, that all products of all $r_i$, where every $i$ occurs in this forms once, belong to $B'$, then using the elements of form

 $(r_{1},e,..., e, r_{1}^{-1} )$, ... , $(e,e,...,e, r_{i}, e, r_{i}^{-1} )$, ... ,$(e,e,..., e, r_{p-1}, r_{p-1}^{-1})$, $(e,e,..., e, r_1 r_2 \cdot ...\cdot r_{p-1} )$

  we can express any element of form $(r_1, \ldots, r_{p-1}, r_p) \in W= (B\wr C_p)'$. We need to prove that in such way we can express all element from $W$ and only elements of $W$. The fact that all elements can be generated by elements of $K$ follows from randomness of choice every $r_i$, $i<p$ and the fact that equality (1) holds so construction of $r_p$ is determined.
\end{proof}

\begin{lemma} \label{form of comm_2} For any group $B$ and integer $p\geq 2$  if $w\in (B \wr C_p)'$ then $w$ can be represented as the following wreath recursion
\begin{align*}
w=(r_1, r_2, \ldots, r_{p-1},  r_1\m \ldots r_{p-1}\m \prod \limits ^{k}_{j=1} [f_j,g_j]),
\end{align*}
where $r_1, \ldots, r_{p-1}, f_j, g_j \in B$ and $k\leq cw(B)$.
\end{lemma}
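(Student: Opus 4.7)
The plan is to reduce this statement directly to the previous lemma (Lemma \ref{form of comm}) combined with the definition of commutator width. First I would observe that since $C_p$ is abelian, its commutator subgroup is trivial, so the projection $(B \wr C_p)' \to C_p$ is trivial; hence every $w \in (B \wr C_p)'$ has trivial active component and can be written as a pure tuple $w = (r_1, r_2, \ldots, r_p)$ with $r_i \in B$. This justifies the absence of a permutation factor in the asserted wreath recursion.

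Next I would invoke Lemma \ref{form of comm}: membership of $w$ in $(B \wr C_p)'$ is equivalent to the product of the coordinates (in any order) lying in $B'$. Choosing the order $r_{p-1} r_{p-2} \cdots r_1 r_p$ (the same order as in the proof of Lemma \ref{form of comm}), we obtain an element $x \in B'$. By the definition of commutator width, this $x$ can be expressed as a product of at most $cw(B)$ commutators, so we may write
\begin{equation*}
r_{p-1} r_{p-2} \cdots r_1 r_p \;=\; x \;=\; \prod_{j=1}^{k}[f_j, g_j], \qquad k \leq cw(B),
\end{equation*}
with $f_j, g_j \in B$. Solving for $r_p$ gives $r_p = (r_{p-1} \cdots r_1)^{-1} x = r_1^{-1} r_2^{-1} \cdots r_{p-1}^{-1} \prod_{j=1}^{k}[f_j, g_j]$, which is exactly the claimed form.

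Conversely, any wreath recursion of this shape satisfies the hypothesis of Lemma \ref{form of comm} by construction, so it belongs to $(B \wr C_p)'$; this takes care of the fact that the indicated form does parametrize the commutator subgroup and is not just a necessary condition. The main (and essentially only) subtlety is bookkeeping of the order in which the $r_i$ and their inverses appear, which must be tracked carefully so that the resulting inverse string matches the one appearing in the statement; but this is transparent once the specific ordering from Lemma \ref{form of comm} is used. No independent obstacle remains, since both the reduction to trivial active component and the extraction of $k \leq cw(B)$ commutators are immediate.
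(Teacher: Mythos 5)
Your proposal is correct and follows essentially the same route as the paper: apply Lemma \ref{form of comm} to write $w=(r_1,\ldots,r_p)$ with $r_{p-1}\cdots r_1 r_p = x \in B'$, solve for $r_p = r_1\m\cdots r_{p-1}\m x$, and expand $x$ as a product of at most $cw(B)$ commutators by the definition of commutator width. The only additions (the remark that the active component is trivial, and the converse direction) are harmless extras not required by the statement.
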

\begin{proof}
According to Lemma~\ref{form of comm} we have the following wreath recursion
\begin{align*}
w=(r_1, r_2, \ldots, r_{p-1},  r_p),
\end{align*}
where $r_i\in B$ and $r_{p-1} r_{p-2} \ldots r_2 r_1  r_p = x \in B'$. Therefore we can write $r_p = r_1\m \ldots r_{p-1}\m x$. We also can rewrite element $x\in B'$ as product of commutators $x = \prod \limits ^{k}_{j=1} [f_j,g_j]$ where $k\leq cw(B)$.
\end{proof}

\begin{lemma} \label{c_p_wr_b_elem_repr}
For any group $B$ and integer $p\geq 2$ if $w\in (B \wr C_p)'$ is defined by the following wreath recursion
\begin{align*}
w=(r_1, r_2, \ldots, r_{p-1},  r_1\m \ldots r_{p-1}\m [f,g]),
\end{align*}
where $r_1, \ldots, r_{p-1}, f, g \in B$ then we can represent $w$ as the following  commutator
\begin{align*}
w = [(a_{1,1},\ldots, a_{1,p})\sigma, (a_{2,1},\ldots, a_{2,p})],
\end{align*}
where
\begin{align*}
a_{1,i} &=  e, \mbox{ for $1\leq i \leq p-1$ },\\
a_{2,1} &= (f\m)^{r_1\m \ldots r_{p-1}\m},\\
a_{2,i} &= r_{i-1} a_{2,i-1},\mbox{ for $2\leq i \leq p$},\\
a_{1,p} &= g^{a_{2,p}\m}.
\end{align*}
\end{lemma}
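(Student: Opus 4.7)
The plan is to verify the identity by direct computation: apply the wreath recursion multiplication rule to $[\alpha,\beta]$, where $\alpha=(a_{1,1},\ldots,a_{1,p})\sigma$ and $\beta=(a_{2,1},\ldots,a_{2,p})$, and then match the result coordinate-by-coordinate against the given form of $w$. Since the permutation parts contribute $\sigma\cdot e\cdot\sigma\m\cdot e=e$, the commutator has trivial permutation part, as does $w$.

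A routine application of the multiplication rule (used three times, together with one inversion of a tuple-with-cycle factor, for which $\alpha\m=(a_{1,\sigma\m(1)}\m,\ldots,a_{1,\sigma\m(p)}\m)\sigma\m$) shows that the $i$-th coordinate of $[\alpha,\beta]$ is
\begin{equation*}
a_{1,i}\,a_{2,\sigma(i)}\,a_{1,i}\m\,a_{2,i}\m.
\end{equation*}
For $1\le i\le p-1$ we have $a_{1,i}=e$ and $\sigma(i)=i+1$, so this collapses to $a_{2,i+1}a_{2,i}\m$, and the recursive definition $a_{2,i+1}=r_i a_{2,i}$ makes it telescope to $r_i$. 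This matches the prescribed $i$-th coordinate of $w$.

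The only delicate case, and thus the main obstacle, is the $p$-th coordinate $a_{1,p}\,a_{2,1}\,a_{1,p}\m\,a_{2,p}\m$; this is where the conjugations in the definitions of $a_{2,1}$ and $a_{1,p}$ come into play. Set $s=r_{p-1}r_{p-2}\cdots r_1$. Iterating the recursion gives $a_{2,p}=s\cdot a_{2,1}$, and since $a_{2,1}=s\m f\m s$ by definition, we obtain $a_{2,p}=f\m s$; hence $a_{1,p}=a_{2,p}\m g\,a_{2,p}=s\m f g f\m s$. Substituting these expressions into the four-fold product and cancelling the three internal $ss\m$ pairs reduces it to $s\m f g f\m g\m=s\m[f,g]=r_1\m\cdots r_{p-1}\m[f,g]$, which is exactly the last coordinate of $w$. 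The content of the lemma is really that the conjugating data in $a_{1,p}$ and $a_{2,1}$ have been engineered precisely so that these cancellations occur; once the coordinate-wise commutator formula in wreath recursion is written down, the remainder is mechanical.
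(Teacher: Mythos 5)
Your proposal is correct and follows essentially the same route as the paper: both compute the commutator coordinate-wise via the wreath recursion multiplication rule, obtain the formula $a_{1,i}a_{2,\sigma(i)}a_{1,i}\m a_{2,i}\m$, telescope the first $p-1$ coordinates to $r_i$, and then verify the last coordinate equals $r_1\m\cdots r_{p-1}\m[f,g]$. The only (cosmetic) difference is that you simplify the $p$-th coordinate by substituting the closed forms $a_{2,p}=f\m s$ and $a_{1,p}=s\m fgf\m s$ and cancelling, whereas the paper massages it with conjugation identities into $(a_{2,p}a_{2,1}\m)\m[(a_{2,1}\m)^{a_{2,p}a_{2,1}\m},a_{1,p}^{a_{2,p}}]$ before identifying the pieces; both are valid mechanical verifications of the same identity.
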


\begin{proof}
Let us to consider the following commutator
\begin{align*}
\kappa &= (a_{1,1},\ldots, a_{1,p})\sigma \cdot (a_{2,1},\ldots, a_{2,p}) \cdot (a_{1,p}\m,a_{1,1}\m,\ldots, a_{1,p-1}\m)\sigma\m \cdot (a_{2,1}\m,\ldots, a_{2,p}\m)\\
&= (a_{3,1}, \ldots, a_{3,p}),
\end{align*}
where
\begin{align*}
a_{3,i} = a_{1,i}a_{2,1 + (i \bmod p)}a_{1,i}\m a_{2,i}\m.
\end{align*}
At first we compute the following
\begin{align*}
a_{3,i} = a_{1,i}a_{2,i+1}a_{1,i}\m a_{2,i}\m = a_{2,i+1} a_{2,i}\m = r_{i} a_{2,i} a_{2,i}\m=  r_i, \mbox{ for $1\leq i \leq p-1$}.
\end{align*}
Then we make some transformation of $a_{3,p}$:
\begin{align*}
a_{3,p}&=a_{1,p}a_{2,1}a_{1,p}\m a_{2,p}\m\\
&=(a_{2,1} a_{2,1}\m) a_{1,p}a_{2,1}a_{1,p}\m a_{2,p}\m\\
&=a_{2,1} [a_{2,1}\m, a_{1,p}] a_{2,p}\m\\
&=a_{2,1}a_{2,p}\m a_{2,p} [a_{2,1}\m, a_{1,p}] a_{2,p}\m\\
&= (a_{2,p} a_{2,1}\m)\m  [(a_{2,1}\m)^{a_{2,p}}, a_{1,p}^{a_{2,p}}]\\
&= (a_{2,p} a_{2,1}\m)\m  [(a_{2,1}\m)^{a_{2,p} a_{2,1}\m}, a_{1,p}^{a_{2,p}}].
\end{align*}
Now we can see that the form of the commutator $\kappa$ is similar to the form of $w$.

Let us make the following notation
\begin{align*}
r' = r_{p-1} \ldots r_1.
\end{align*}
We note that from the definition of $a_{2, i}$ for $2\leq i \leq p$ it follows that
\begin{align*}
r_i = a_{2,i+1} a_{2,i}\m, \mbox{ for $1\leq i \leq p-1$}.
\end{align*}
Therefore
\begin{align*}
r' &=  (a_{2,p} a_{2,p-1}\m) (a_{2,p-1} a_{2,p-2}\m)\ldots (a_{2,3} a_{2,2}\m) (a_{2,2} a_{2,1}\m)\\
&= a_{2,p} a_{2,1}\m.
\end{align*}
And then
\[
(a_{2,p} a_{2,1}\m)\m = (r')\m = r_1\m \ldots r_{p-1}\m.
\]
And now we compute the following
\begin{align*}
(a_{2,1}\m)^{a_{2,p} a_{2,1}\m} &= (((f\m)^{r_1\m \ldots r_{p-1}\m})\m)^{r'} = (f^{(r')\m})^{r'}  = f,\\
a_{1,p}^{a_{2,p}} &= (g^{a_{2,p}\m})^{a_{2,p}} = g.
\end{align*}
Finally we conclude that
\begin{align*}
a_{3,p} = r_1\m \ldots r_{p-1}\m [f,g].
\end{align*}
Thus, the commutator $\kappa$ is presented exactly in the similar form as $w$ has.
\end{proof}
For future using we formulate previous Lemma for the case $p=2$.
\begin{corollary} \label{c_2_wr_b_elem_repr}
For any group $B$ if $w\in (B \wr C_2)'$ is defined by the following wreath recursion
\begin{align*}
w=(r_1,  r_1\m [f,g]),
\end{align*}
where $r_1, f, g \in B$ then we can represent $w$ as commutator
\begin{align*}
w = [(e, a_{1,2})\sigma, (a_{2,1}, a_{2,2})],
\end{align*}
where
\begin{align*}
a_{2,1} &= (f\m)^{r_1\m},\\
a_{2,2} &= r_{1} a_{2,1},\\
a_{1,2} &= g^{a_{2,2}\m}.
\end{align*}
\end{corollary}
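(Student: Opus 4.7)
The plan is simply to specialize Lemma \ref{c_p_wr_b_elem_repr} to $p = 2$ and to verify that all the index ranges collapse onto the data listed here. Setting $p = 2$ shortens the tuple $r_1, \ldots, r_{p-1}$ to the single entry $r_1$, so the product $r_1\m \ldots r_{p-1}\m [f,g]$ reduces to $r_1\m [f,g]$ and the wreath recursion for $w$ becomes $w = (r_1, r_1\m [f,g])$. On the commutator side, the range $1 \le i \le p-1$ degenerates to $i = 1$, leaving only $a_{1,1} = e$; the recursion $a_{2,i} = r_{i-1} a_{2,i-1}$ produces only $a_{2,2} = r_1 a_{2,1}$; and $a_{1,p} = g^{a_{2,p}\m}$ specializes to $a_{1,2} = g^{a_{2,2}\m}$. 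Matching these with the statement gives exactly the claimed commutator presentation.

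As an independent check one can unfold $\kappa = [(e, a_{1,2})\sigma, (a_{2,1}, a_{2,2})]$ directly via the wreath product multiplication rule. The first coordinate reduces to $a_{2,2} a_{2,1}\m = r_1$ immediately from the definition $a_{2,2} = r_1 a_{2,1}$, while the second coordinate equals $a_{1,2} a_{2,1} a_{1,2}\m a_{2,2}\m$; applying the same conjugation identity used in the proof of Lemma \ref{c_p_wr_b_elem_repr} to rewrite this as $(a_{2,2} a_{2,1}\m)\m [(a_{2,1}\m)^{a_{2,2} a_{2,1}\m}, a_{1,2}^{a_{2,2}}]$, and substituting the defining formulas for $a_{2,1}$, $a_{2,2}$, $a_{1,2}$, collapses the expression to $r_1\m [f,g]$.

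There is no genuine obstacle here, since the general case has already been handled. The only points requiring minor care are the degeneracies at $p = 2$: the auxiliary quantity $r' = r_{p-1} \ldots r_1$ from the previous proof collapses to $r' = r_1$, so the identity $(r')\m = r_1\m$ is immediate and the telescoping argument needed for general $p$ is not invoked. Everything else is a mechanical substitution of $p = 2$ into the already-established formulas.
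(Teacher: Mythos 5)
Your proposal is correct and matches the paper exactly: the paper states this corollary as an immediate specialization of Lemma~\ref{c_p_wr_b_elem_repr} to $p=2$ with no separate argument, which is precisely your first paragraph. Your additional direct verification of the commutator is a harmless (and correct) bonus check but not needed.
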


\begin{lemma} \label{comm B_k}
For any group $B$ and integer $p\geq 2$ inequality
\begin{align*}
cw(B \wr C_p) \leq \max(1,cw(B))
\end{align*}
holds.
\end{lemma}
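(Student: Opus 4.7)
The plan is to reduce an arbitrary element of $(B \wr C_p)'$ to a canonical wreath recursion form via Lemma~\ref{form of comm_2}, then peel off one commutator using Lemma~\ref{c_p_wr_b_elem_repr}, and write what remains as a product of tuple-level commutators in the last coordinate only. First I would take $w \in (B \wr C_p)'$ and, by Lemma~\ref{form of comm_2}, write
\[
w=(r_1,\ldots,r_{p-1},\, r_1\m \ldots r_{p-1}\m \prod_{j=1}^{k}[f_j,g_j]),
\qquad k\le cw(B).
\]
There are then two cases, according to whether $cw(B)=0$ or $cw(B)\ge 1$.

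If $cw(B)=0$, then $k=0$ and $w=(r_1,\ldots,r_{p-1},r_1\m\ldots r_{p-1}\m)$. Applying Lemma~\ref{c_p_wr_b_elem_repr} with $f=g=e$ writes $w$ as a single commutator, giving $cw(B\wr C_p)\le 1=\max(1,cw(B))$.

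If $cw(B)\ge 1$, I would apply Lemma~\ref{c_p_wr_b_elem_repr} with $f=f_1$, $g=g_1$ to obtain a single commutator
\[
w_1=(r_1,\ldots,r_{p-1},\, r_1\m\ldots r_{p-1}\m [f_1,g_1]).
\]
A direct tuple computation (recall that $(B\wr C_p)'$ lies in the base group $B^p$, so multiplication is componentwise) shows
\[
w_1\m w = (e,\ldots,e,\, \textstyle\prod_{j=2}^{k}[f_j,g_j]),
\]
because the first $p-1$ coordinates cancel and the $p$-th coordinate simplifies after collapsing $r_{p-1}\cdots r_1 r_1\m\cdots r_{p-1}\m$. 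Each factor $(e,\ldots,e,[f_j,g_j])$ is visibly the commutator $[(e,\ldots,e,f_j),(e,\ldots,e,g_j)]$ taken in the base group. Hence $w_1\m w$ is a product of $k-1$ commutators, and $w=w_1\cdot(w_1\m w)$ is a product of $k\le cw(B)$ commutators. Combining both cases gives $cw(B\wr C_p)\le \max(1,cw(B))$.

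The only delicate point is the tuple computation that strips exactly one commutator $[f_1,g_1]$ from the $p$-th coordinate while killing the $r_i$'s in the first $p-1$ coordinates; this is a routine check but is the one place where the canonical form from Lemma~\ref{form of comm_2} and the explicit single-commutator representation from Lemma~\ref{c_p_wr_b_elem_repr} really have to match up. Everything else is bookkeeping.
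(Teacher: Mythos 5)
Your proposal is correct and follows essentially the same route as the paper: both write $w$ in the canonical form of Lemma~\ref{form of comm_2}, absorb $(r_1,\ldots,r_{p-1},r_1\m\cdots r_{p-1}\m[f_1,g_1])$ into a single commutator via Lemma~\ref{c_p_wr_b_elem_repr}, and express the remainder as the $k-1$ base-group commutators $[(e,\ldots,e,f_j),(e,\ldots,e,g_j)]$. Your explicit componentwise verification of $w_1\m w$ and the separate treatment of the case $cw(B)=0$ merely make explicit what the paper leaves implicit.
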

\begin{proof}
We can represent any $w\in (B \wr C_p)'$ by Lemma~\ref{form of comm} with the following wreath recursion
\begin{align*}
w&=(r_1, r_2, \ldots, r_{p-1},  r_1\m \ldots, r_{p-1}\m  \prod \limits_{j=1}^{k} [f_{j},g_{j}])\\
 &= (r_1, r_2, \ldots, r_{p-1},  r_1\m \ldots, r_{p-1}\m  [f_{1},g_{1}]) \cdot \prod \limits_{j=2} ^{k} [(e, \ldots, e, f_j), (e, \ldots, e, g_j)],
\end{align*}
where $r_1, \ldots, r_{p-1}, f_j, g_j \in B$ and $k\leq cw(B)$. Now by the Lemma~\ref{c_p_wr_b_elem_repr}  we  can see that $w$ can be represented as a product of $\max(1, cw(B))$ commutators.
\end{proof}

\begin{corollary} \label{comm Cycl}
If  $W = C_{p_k} \wr \ldots \wr C_{p_1}$ then
$cw(W) =1$ for $k\geq 2$.
\end{corollary}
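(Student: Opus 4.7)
The plan is to prove this by induction on $k$, with Lemma~\ref{comm B_k} supplying the inductive step and an elementary non‑abelianness argument supplying the matching lower bound.

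For the base case $k=2$, set $B = C_{p_2}$, which is abelian, so $cw(B) = 0$. Writing $W = C_{p_2} \wr C_{p_1} = B \wr C_{p_1}$ and applying Lemma~\ref{comm B_k} gives $cw(W) \le \max(1, 0) = 1$. For the reverse inequality, I would observe that $W$ is non‑abelian (since $p_1, p_2 \ge 2$ implies the wreath product of two non‑trivial groups is non‑abelian), so $W' \neq \{e\}$ and therefore $cw(W) \geq 1$.

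For the inductive step, assume the claim for some $k-1 \ge 2$, i.e., $cw(C_{p_{k-1}} \wr \ldots \wr C_{p_1}) = 1$. Using associativity of the wreath product of permutation groups (noted in Preliminaries), I write
\[
W \;=\; \bigl(C_{p_k} \wr \ldots \wr C_{p_2}\bigr) \wr C_{p_1} \;=\; B \wr C_{p_1},
\]
where $B = C_{p_k} \wr \ldots \wr C_{p_2}$ has $k-1 \ge 2$ factors, so the inductive hypothesis yields $cw(B) = 1$. Then Lemma~\ref{comm B_k} gives $cw(W) \leq \max(1, cw(B)) = 1$, and $cw(W) \geq 1$ again follows from $W$ being non‑abelian.

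I do not expect a serious obstacle here: the corollary is essentially a direct packaging of Lemma~\ref{comm B_k} into an inductive argument. The one subtlety worth stating explicitly is the associativity of the iterated wreath product so that we may peel off the outermost $C_{p_1}$ (or, equivalently, parse the iterated product in the way required by Lemma~\ref{comm B_k}); beyond that the only thing to check is that the lower bound $cw(W)\ge 1$ does not degenerate, which it cannot since each factor $C_{p_i}$ is non‑trivial.
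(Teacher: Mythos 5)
Your proof is correct and follows essentially the same route as the paper: induction on $k$, using the associativity of the wreath product to write $W = B \wr C_{p_1}$ and applying Lemma~\ref{comm B_k} for the upper bound, with non-abelianness of the wreath product giving $cw(W)\geq 1$. The only difference is cosmetic — you state the base case $k=2$ and the lower-bound argument slightly more explicitly than the paper does.
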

\begin{proof}
If $B= C_{p_k} \wr C_{p_{k-1}}$  then taking into consideration that $cw(B)>0$ (because $C_{p_k} \wr C_{p_{k-1}}$ is not commutative group). Since Lemma \ref{comm B_k} implies that $cw(C_{p_k} \wr C_{p_{k-1}})=1$ then according to the inequality $cw(C_{p_k} \wr C_{p_{k-1}} \wr C_{p_{k-2}}) \leq \max(1,cw(B))$ from Lemma \ref{comm B_k} we obtain $cw(C_{p_k} \wr C_{p_{k-1}} \wr C_{p_{k-2}})=1$. Analogously if $W = C_{p_k} \wr \ldots \wr C_{p_1}$ and supposition of induction for $C_{p_{k}} \wr \ldots \wr C_{p_2}$ holds, then using an associativity of a permutational wreath product we obtain from the inequality of Lemma \ref{comm B_k} and the equality $cw( C_{p_k} \wr \ldots \wr C_{p_2})=1$ that $cw(W)=1$.
\end{proof}



We define our partial ordered set $M$ as the set of all finite wreath products of cyclic groups. We make of use directed set $\mathbb{N}$.
\begin{equation} \label{cwH }
{{H}_{k}}=\underset{i=1}{\overset{k}{\mathop{\wr }}}\,{{\mathcal{C}}_{{{p}_{i}}}}
\end{equation}

Moreover, it has already been proved in Corollary  \ref{comm Cycl} that each group  of the form $\underset{i=1}{\overset{k}{\mathop{\wr }}}\,{{\mathcal{C}}_{{{p}_{i}}}}$  has a  commutator  width equal to 1, i.e $cw(\underset{i=1}{\overset{k}{\mathop{\wr }}}\,{{\mathcal{C}}_{{{p}_{i}}}})=1$.   A partial order  relation will be a subgroup relationship.  Define the injective homomorphism $f_{k,k+1}$ from the $\underset{i=1}{\overset{k}{\mathop{\wr}}}\,{{\mathcal{C}}_{{{p}_{i}}}}$ into $\underset{i=1}{\overset{k+1}{\mathop{\wr }}}\,{{\mathcal{C}}_{{{p}_{i}}}}$ by mapping a generator of active group ${\mathcal{C}_{{{p}_{i}}}}$ of ${{H}_{k}}$ in a generator of active group ${\mathcal{C}_{{{p}_{i}}}}$ of ${{H}_{k+1}}$.
In more details the injective homomorphism $f_{k,k+1}$ is defined as $g \mapsto g(e,..., e)$, where a generator $g\in \underset{i=1}{\overset{k} {\mathop{\wr }}}\, {\mathcal{C}}_{{p}_{i}}$, $g(e,..., e)\in  \underset{i=1}{\overset{k+1}{\mathop{\wr }}}\, {\mathcal{C}}_{{p}_{i}}$.

Therefore this is an injective homomorphism of ${{H}_{k}}$ onto subgroup $\underset{i=1}{\overset{k}{\mathop{\wr }}}\,{{\mathcal{C}}_{{{p}_{i}}}}$ of $H_{k+1}$.

\begin{corollary} \label{comm Cycl}
The direct limit  $\underrightarrow{\lim }\underset{i=1}{\overset{k}{\mathop{\wr }}}\,{{\mathcal{C}}_{{{p}_{i}}}}$ of  direct system $\left\langle {{f}_{k,j}},\,\underset{i=1}{\overset{k}{\mathop{\wr }}}\,{{\mathcal{C}}_{{{p}_{i}}}} \right\rangle $ has commutator width 1.
\end{corollary}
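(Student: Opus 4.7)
The plan is to exploit the fact that a direct limit of groups realises every element as the image of some element living at a finite stage of the system, together with the commutator-width bound already proved for each $H_k$.

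First I would set $H_\infty = \underrightarrow{\lim} H_k$ and denote by $\iota_k\colon H_k \to H_\infty$ the canonical homomorphism induced by the injective transition maps $f_{k,k+1}$. The standard description of a direct limit of groups says that every element of $H_\infty$ is of the form $\iota_k(g)$ for some $k$ and some $g \in H_k$, and two such representations $\iota_k(g)$, $\iota_\ell(h)$ coincide iff they become equal in some $H_m$ after pushing both forward via the $f$'s. Because each $f_{k,k+1}$ is injective, the maps $\iota_k$ themselves are injective, so each $H_k$ is literally a subgroup of $H_\infty$ and the union $\bigcup_k \iota_k(H_k)$ equals $H_\infty$.

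Next I would take an arbitrary $w \in (H_\infty)'$. By definition $w$ is a finite product of commutators $w = \prod_{j=1}^{m}[u_j,v_j]$ with $u_j, v_j \in H_\infty$. Each $u_j$ and each $v_j$ lies in some $\iota_{k_j}(H_{k_j})$; choosing $N$ to be the maximum of all these indices, the chain
\[
\iota_{k_j}(H_{k_j}) \subseteq \iota_N(H_N) \subseteq H_\infty
\]
shows that all $u_j, v_j$ are in $\iota_N(H_N)$. Therefore $w$ lies in $\iota_N(H_N')$, i.e.\ $w = \iota_N(w_0)$ for some $w_0 \in H_N'$.

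Now I invoke the previous corollary: $cw(H_N)=1$ for $N \ge 2$, so $w_0 = [a,b]$ for some $a,b \in H_N$. Applying the homomorphism $\iota_N$ yields $w = [\iota_N(a), \iota_N(b)]$, a single commutator in $H_\infty$. Since $w$ was arbitrary, $cw(H_\infty) \le 1$; and $H_\infty$ is nonabelian (it already contains the nonabelian group $H_2$), so $cw(H_\infty) \ge 1$, giving equality. I do not expect any genuine obstacle here: the only thing that must be handled carefully is the verification that finitely many elements of a direct limit can always be pulled back to a common stage, which is immediate from injectivity of $f_{k,k+1}$ and the standard construction of $\underrightarrow{\lim}$.
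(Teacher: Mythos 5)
Your proposal is correct and follows essentially the same route as the paper: identify each $H_k$ with its image in the limit via the injective transition maps, pull an arbitrary element of the derived subgroup back to a finite stage, and apply $cw(H_N)=1$ there. Your write-up is in fact more complete than the paper's, since you explicitly justify the one step the paper glosses over --- that an element of $(H_\infty)'$ is a finite product of commutators whose entries all land in a common $\iota_N(H_N)$, hence lies in $\iota_N(H_N')$.
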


\begin{proof}
We make the transition to the direct limit in the direct system $\left\langle {{f}_{k,j}},\,\underset{i=1}{\overset{k}{\mathop{\wr }}}\,{{\mathcal{C}}_{{{p}_{i}}}} \right\rangle $  of injective mappings from chain $e\to \,\,...\,\,\to \underset{i=1}{\overset{k}{\mathop{\wr }}}\,{{\mathcal{C}}_{{{p}_{i}}}}\to \underset{i=1}{\overset{k+1}{\mathop{\wr }}}\,{{\mathcal{C}}_{{{p}_{i}}}}\to \underset{i=1}{\overset{k+2}{\mathop{\wr }}}\,{{\mathcal{C}}_{{{p}_{i}}}}\to ...$.


Since all mappings in chains are injective homomorphisms, it has a trivial kernel. Therefore the transition to a direct limit boundary preserves the property $cw(H)=1$, because each group ${{H}_{k}}$ from the chain endowed by $cw(H_k)=1$.


The direct limit of the direct system  is denoted by $\underrightarrow{\lim }\underset{i=1}{\overset{k}{\mathop{\wr }}}\,{{\mathcal{C}}_{{{p}_{i}}}}$ and is defined as disjoint union of the  ${{H}_{k}}$'s modulo a certain equivalence relation:

$$\underrightarrow{\lim }\underset{i=1}{\overset{k}{\mathop{\wr }}}\,{{\mathcal{C}}_{{{p}_{i}}}}={}^{\coprod\limits_{k}{\underset{i=1}{\overset{k}{\mathop{\wr }}}\,{{\mathcal{C}}_{{{p}_{i}}}}}}/{}_{\sim }.$$

Since every element $g$ of $\underrightarrow{\lim }\underset{i=1}{\overset{k}{\mathop{\wr }}}\,{{\mathcal{C}}_{{{p}_{i}}}}$ coincides with a correspondent element from some ${{H}_{k}}$ of direct system, then by the injectivity of the mappings for $g$
the property  $cw(\underset{i=1}{\overset{k}{\mathop{\wr }}}\,{{\mathcal{C}}_{{{p}_{i}}}})=1$ also holds. Thus, it holds for the whole $\underrightarrow{\lim }\underset{i=1}{\overset{k}{\mathop{\wr }}}\,{{\mathcal{C}}_{{{p}_{i}}}}$.
\end{proof}

\begin{corollary} \label{cw_syl_p_s_p_k_eq_1_and_syl_p_a_p_k_eq_1} For prime $p$ and $k\geq 2$ commutator width  $cw(Syl_p(S_{p^k})) = 1$ and for prime $p>2$ and $k\geq 2$ commutator width $cw(Syl_p (A_{p^k})) = 1$.
\end{corollary}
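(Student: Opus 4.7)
The plan is to derive both assertions from Corollary~\ref{comm Cycl} by identifying the relevant Sylow subgroups with iterated wreath products of cyclic groups of order $p$. The first ingredient is the classical theorem of Kaloujnine that
\[
Syl_p(S_{p^k}) \;\cong\; \underset{i=1}{\overset{k}{\mathop{\wr}}}\, C_p.
\]
This isomorphism fits naturally inside the wreath recursion formalism introduced in the Preliminaries: each of the $k$ levels of the rooted $p$-ary tree $X^{[k]}$ contributes one cyclic factor acting by the shift $\sigma=(1,2,\ldots,p)$, and the full automorphism group of $X^{[k]}$ is exactly the iterated wreath product above.

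Given this identification, the assertion $cw(Syl_p(S_{p^k}))=1$ for $k\geq 2$ follows immediately from Corollary~\ref{comm Cycl} applied with $p_1=\cdots=p_k=p$. For $k\geq 2$ the iterated wreath product is non-abelian, so its derived subgroup is nontrivial and a commutator width of exactly $1$ makes sense.

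For the alternating case with odd $p$, I would verify that $Syl_p(S_{p^k})\subseteq A_{p^k}$, so that the two Sylow subgroups coincide. A single $p$-cycle is a product of $p-1$ transpositions, and $p-1$ is even whenever $p$ is odd, so every $p$-cycle is an even permutation. Under the tree description, each element of $Syl_p(S_{p^k})$ acts on the leaf set $X^k$ as a product of disjoint $p$-cycles induced by its vertex permutations at the various levels, hence as an even permutation. A comparison of orders (using $\gcd(p,[S_{p^k}:A_{p^k}])=\gcd(p,2)=1$) then gives $Syl_p(A_{p^k})=Syl_p(S_{p^k})$, and the second claim reduces to the first.

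The main obstacle is essentially nil: the heavy lifting has been done in Lemma~\ref{comm B_k} and Corollary~\ref{comm Cycl}. The only mildly delicate point is the parity argument, which is precisely where the hypothesis $p>2$ enters — for $p=2$ a transposition is odd, $Syl_2(S_{2^k})\not\subseteq A_{2^k}$, and the alternating case requires the separate analysis of $Syl_2 A_{2^k}$ carried out elsewhere in the paper.
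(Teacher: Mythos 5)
Your proposal is correct and follows essentially the same route as the paper: invoke Kaloujnine's identification $Syl_p(S_{p^k})\cong \underset{i=1}{\overset{k}{\mathop{\wr}}}\,C_p$, apply Corollary~\ref{comm Cycl}, and reduce the alternating case for odd $p$ to the symmetric case via $Syl_p S_{p^k}\simeq Syl_p A_{p^k}$. The only difference is that you sketch the parity and order argument for that last isomorphism explicitly, whereas the paper simply cites it.
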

\begin{proof}
Since $Syl_p(S_{p^k}) \simeq  \stackrel{k}{ \underset{\text{\it i=1}}{\wr }}C_p$ see \cite{Kal, Paw}, then $cw(Syl_p(S_{p^k}))=1$. As well known in case $p>2$ we have $Syl_p S_{p^k} \simeq Syl_p A_{p^k} $ see \cite{SkIrred, Dm}, then $cw(Syl_p(A_{p^k}))=1$.
\end{proof}

\begin{proposition}\label{comm_F_k_is_subgroup_of_L_k}
The following inclusion $B_k'<G_k$ holds.
\end{proposition}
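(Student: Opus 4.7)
The plan is to proceed by induction on $k$, leveraging Lemma~\ref{form of comm} which characterises elements of a commutator subgroup $(B \wr C_p)'$ by the product condition on their wreath recursion components.

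For the base case $k=1$, the group $B_1 = C_2$ is abelian, so $B_1' = \{e\} = G_1$, and the inclusion is trivial. (One may equally well start the induction at $k=2$, where $B_2 = C_2 \wr C_2$ is the dihedral group of order $8$ and a direct check shows $B_2' \subset G_2$.)

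For the inductive step, assume $B_{k-1}' \subseteq G_{k-1}$, and take an arbitrary $w \in B_k' = (B_{k-1} \wr C_2)'$. Since every commutator projects trivially to the top $C_2$, we may write $w = (r_1, r_2)$ with $r_1, r_2 \in B_{k-1}$. Applying Lemma~\ref{form of comm} with $p = 2$ and passive group $B = B_{k-1}$ gives $r_1 r_2 \in B_{k-1}'$. By the induction hypothesis, $r_1 r_2 \in B_{k-1}' \subseteq G_{k-1}$, and this is precisely the defining condition for $w = (r_1, r_2) \in G_k$. Hence $B_k' \subseteq G_k$.

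There is no real obstacle here: the statement is essentially a corollary of Lemma~\ref{form of comm} combined with the recursive definition of $G_k$. The only subtlety worth spelling out is the vanishing of the top permutation component of $w$, which follows because $B_k \to C_2$ is a homomorphism onto an abelian group and therefore kills $B_k'$. Once this is noted, the wreath-recursion component condition of Lemma~\ref{form of comm} matches verbatim the membership condition for $G_k$, and the induction closes immediately.
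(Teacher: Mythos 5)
Your proof is correct and follows essentially the same route as the paper: induction on $k$, with Lemma~\ref{form of comm} supplying $r_1 r_2 \in B_{k-1}'$ and the induction hypothesis plus the recursive definition of $G_k$ closing the argument. Your extra remark on why the top permutation component of $w$ vanishes is a detail the paper leaves implicit, but otherwise the two arguments coincide.
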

 \begin{proof}
 Induction on $k$. For $k=1$ we have $B_k' = G_k = \{ e \}$. Let us fix some  $g=(g_1, g_2)\in B_k'$. Then $g_1 g_2 \in B_{k-1}'$ by Lemma~\ref{form of comm}. As $B_{k-1}' < G_{k-1}$  by induction hypothesis therefore $g_1 g_2 \in G_{k-1}$ and by definition of $G_k$ it follows that $g \in G_k$.
%
 \end{proof}
 \begin{corollary}\label{G_k_is_normal_in_B_k}
 The set $G_k$ is a subgroup in the group $B_k$.
 \end{corollary}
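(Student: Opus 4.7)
The plan is to prove $G_k\leq B_k$ by induction on $k$, realising $G_k$ as the kernel of a recursively defined ``sign'' homomorphism $\varepsilon_k\colon B_k\to C_2$. The base case $k=1$ is immediate since $G_1=\{e\}$, and one may take $\varepsilon_1$ to be the identity map on $B_1=C_2$. For the inductive step, assume $G_{k-1}=\ker\varepsilon_{k-1}$ is already known to be a subgroup of $B_{k-1}$, and define
\[
\varepsilon_k\bigl((g_1,g_2)\pi\bigr) := \varepsilon_{k-1}(g_1)\,\varepsilon_{k-1}(g_2),
\]
for arbitrary $\pi\in C_2$ (trivial or the swap).

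The one real computation is verifying that $\varepsilon_k$ is a homomorphism. Given $u=(g_1,g_2)\pi$ and $v=(h_1,h_2)\rho$, the wreath recursion product formula from Section~2 gives $uv=(g_1h_{\pi(1)},\,g_2h_{\pi(2)})\pi\rho$, whence
\[
\varepsilon_k(uv)=\varepsilon_{k-1}(g_1)\varepsilon_{k-1}(g_2)\,\varepsilon_{k-1}(h_{\pi(1)})\varepsilon_{k-1}(h_{\pi(2)}).
\]
The reordering $\varepsilon_{k-1}(h_{\pi(1)})\varepsilon_{k-1}(h_{\pi(2)})=\varepsilon_{k-1}(h_1)\varepsilon_{k-1}(h_2)$ is free because the target $C_2$ is abelian, so we conclude $\varepsilon_k(uv)=\varepsilon_k(u)\varepsilon_k(v)$. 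No case distinction on $\pi$ is needed; this is precisely the step where the wreath structure plays with the abelianness of $C_2$, and it is the closest thing to an obstacle—though in truth it is a one-line check.

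It remains to identify $G_k$ with $\ker\varepsilon_k$. Since $\varepsilon_{k-1}$ is a homomorphism on $B_{k-1}$, $(g_1,g_2)\pi\in\ker\varepsilon_k$ iff $\varepsilon_{k-1}(g_1)\varepsilon_{k-1}(g_2)=1$ iff $\varepsilon_{k-1}(g_1g_2)=1$ iff $g_1g_2\in G_{k-1}$, which is exactly the defining condition of $G_k$. Hence $G_k=\ker\varepsilon_k$, so $G_k$ is a (normal, index-$2$) subgroup of $B_k$, completing the induction. Conceptually, $\varepsilon_k$ is nothing but the restriction to $B_k=Syl_2 S_{2^k}$ of the sign character of $S_{2^k}$: for $k\geq 2$ the swap acts on $2^k$ points as a product of $2^{k-1}$ transpositions and is therefore even, which is exactly why $\pi$ does not appear in the recursion for $\varepsilon_k$, and $G_k=B_k\cap A_{2^k}$ is identified with $Syl_2 A_{2^k}$.
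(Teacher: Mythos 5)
Your proof is correct, and it takes a genuinely different route from the paper's. The paper argues directly from the definition: it checks closure under multiplication by asserting that if $g_1g_2,h_1h_2\in G_{k-1}$ then the product of the coordinates of $uv$ lies in $G_{k-1}$ ``because $G_{k-1}$ is a subgroup,'' and dismisses inverses as trivial. That direct check is actually the delicate point: the coordinates of $uv=(g_1h_{\pi(1)},g_2h_{\pi(2)})\pi\rho$ multiply to $g_1h_{\pi(1)}g_2h_{\pi(2)}$, not to $g_1g_2h_1h_2$, so one needs an extra reason (normality of $G_{k-1}$ with abelian quotient, or an index-$2$ argument) to commute the factors; the paper glosses over this. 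Your construction of the recursive sign homomorphism $\varepsilon_k$ absorbs exactly that reordering into the abelianness of the target $C_2$, which makes the step rigorous with no case analysis, and it yields strictly more for the same effort: $G_k=\ker\varepsilon_k$ is automatically normal, and (once one notes $\varepsilon_k$ is onto) of index $2$ --- facts the paper has to establish separately in Lemma~\ref{order_of_G_k} and the subsequent corollary. Your closing identification of $\varepsilon_k$ with the restriction of the sign character of $S_{2^k}$ also anticipates, and conceptually explains, the paper's later theorem that $G_k\simeq Syl_2 A_{2^k}$. The only cosmetic point is that your induction hypothesis should be stated as ``$\varepsilon_{k-1}$ is a surjective homomorphism with kernel $G_{k-1}$'' so that both the multiplicativity of $\varepsilon_{k-1}(g_1g_2)$ and the index-$2$ claim are available at the inductive step; with that phrasing the argument is complete.
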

 \begin{proof}
 According to recursively definition of $G_k$ and $B_k$, where
$G_k = \{(g_1, g_2)\pi \in B_{k} \mid g_1g_2 \in G_{k-1} \} \,\, k>1$,  $G_k$ is subset of $B_k$ with condition $g_1g_2 \in G_{k-1}$. It is easy to check the closedness by multiplication elements of $G_k$ with condition $g_1g_2, h_1h_2 \in G_{k-1}$ because $G_{k-1}$ is subgroup so $g_1g_2h_1h_2 \in G_{k-1}$ too. A condition of existing inverse be verified trivial.
 \end{proof}

\begin{lemma}\label{order_of_G_k}
For any $k\geq 1$ we have $|G_k| = |B_k|/2$.
\end{lemma}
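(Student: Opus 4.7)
The plan is to prove the claim by straightforward induction on $k$, using the recursive definition of $G_k$ together with the fact that $G_{k-1}$ is a subgroup of $B_{k-1}$ (Corollary~\ref{G_k_is_normal_in_B_k}).

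For the base case $k=1$, the definitions give $G_1 = \{e\}$ and $B_1 = C_2$, so $|G_1| = 1 = |B_1|/2$. For the inductive step, assume $|G_{k-1}| = |B_{k-1}|/2$. A generic element of $B_k = B_{k-1} \wr C_2$ has the form $(g_1, g_2)\pi$ with $g_1, g_2 \in B_{k-1}$ and $\pi \in C_2$, so $|B_k| = 2|B_{k-1}|^2$. By definition, such an element lies in $G_k$ if and only if $g_1 g_2 \in G_{k-1}$, and the permutation $\pi$ is unconstrained.

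The key observation is that once $g_1 \in B_{k-1}$ is chosen arbitrarily, the condition $g_1 g_2 \in G_{k-1}$ forces $g_2 \in g_1^{-1} G_{k-1}$, which is a left coset of $G_{k-1}$ in $B_{k-1}$ (here we invoke that $G_{k-1}$ is a subgroup). Hence the number of admissible $g_2$ is exactly $|G_{k-1}|$, independent of $g_1$. Multiplying by $|B_{k-1}|$ choices for $g_1$ and $2$ choices for $\pi$ yields
\[
|G_k| = 2 \cdot |B_{k-1}| \cdot |G_{k-1}| = 2 \cdot |B_{k-1}| \cdot \tfrac{|B_{k-1}|}{2} = |B_{k-1}|^2 = \tfrac{|B_k|}{2},
\]
which closes the induction.

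There is no real obstacle here; the only subtle point is ensuring that the coset-counting step is legitimate, which it is precisely because Corollary~\ref{G_k_is_normal_in_B_k} guarantees $G_{k-1} \leq B_{k-1}$ and so $|g_1^{-1} G_{k-1}| = |G_{k-1}|$ for every $g_1$.
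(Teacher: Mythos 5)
Your proof is correct and follows essentially the same route as the paper: the paper parametrizes elements of $G_k$ as $(g_1, g_1\m x)\pi$ with $g_1 \in B_{k-1}$, $x \in G_{k-1}$, $\pi \in C_2$ chosen independently, which is exactly your coset-counting argument written out explicitly. Your version is slightly more careful in noting that the count of admissible $g_2$ relies on $G_{k-1}$ being a subgroup, a point the paper leaves implicit.
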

\begin{proof} Induction on $k$. For $k=1$ we have $|G_1| = 1 = |B_1/2|$.
Every element $g\in G_k$ can be uniquely write as the following wreath recursion
\[
g = (g_1, g_2)\pi = (g_1, g_1\m x)\pi
\]
where $g_1 \in B_{k-1}$, $x\in G_{k-1}$ and $\pi \in C_2$. Elements $g_1, x$ and $\pi$ are independent therefore $|G_k| = 2  |B_{k-1}| \cdot |G_{k-1}| = 2  |B_{k-1}| \cdot |B_{k-1}|/2= |B_k|/2$.
\end{proof}

\begin{corollary}\label{G_k_is_normal_in_B_k}
The group $G_k$ is a normal subgroup in the group $B_k$ i.e. $ G_k \lhd B_k$.
\end{corollary}
\begin{proof}
There exists normal embedding  (normal  injective monomorphism)  $\varphi :\,\,{{G}_{k}}\to {{B}_{k}}$  \cite{Heinek} such that $~~{{G}_{k}}\triangleleft {{B}_{k}}$. Indeed, according to Lemma \label{order_of_G_k}
 index $\left| {{B}_{k}}:~~{{G}_{k}} \right|=2$ so it is normal subgroup that is quotient subgroup  $~~{}^{{{B}_{k}}}/{}_{{{C}_{2}}}\simeq {{G}_{k}}$.
\end{proof}

\begin{theorem}
For any $k\geq 1$ we have $G_k \simeq Syl_2 A_{2^k}$.
\end{theorem}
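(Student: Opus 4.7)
The plan is to realize $B_k$ as a Sylow $2$-subgroup of $S_{2^k}$ via its faithful action on the $2^k$ leaves of the binary rooted tree $X^{[k]}$, and to show by induction on $k$ that $G_k = B_k \cap A_{2^k}$. Once this identification is made, Lemma~\ref{order_of_G_k} combined with the fact that $|Syl_2 A_{2^k}| = 2^{2^k-2}$ (by Legendre's formula applied to $(2^k)!/2$) forces $G_k$ to be a $2$-subgroup of $A_{2^k}$ of the maximum possible order, hence a Sylow $2$-subgroup of $A_{2^k}$.

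The core step is the following sign formula: for $k \geq 2$ and any element $g = (g_1, g_2)\pi \in B_k$, the parity of $g$ as a permutation of $2^k$ symbols equals $\mathrm{sgn}(g_1)\mathrm{sgn}(g_2)$, independent of $\pi$. To prove this I would decompose $(g_1, g_2)\sigma$ as $\sigma \cdot (g_2, g_1)$ using the wreath-product multiplication rule, observe that the root-swap $\sigma$ acts on the leaves as $2^{k-1}$ disjoint transpositions interchanging each $(1,x)$ with $(2,x)$, and note that this permutation is even precisely because $2^{k-1}$ is even when $k \geq 2$. The case $\pi = e$ is immediate since $(g_1,g_2)e$ acts as the disjoint union of $g_1$ on the first half and $g_2$ on the second half of the leaves.

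With the sign formula in hand, the induction is straightforward. The base case $k=1$ holds because $G_1 = \{e\} = B_1 \cap A_2$. Assuming $G_{k-1} = B_{k-1} \cap A_{2^{k-1}}$ is the index-$2$ even subgroup of $B_{k-1}$, the element $g = (g_1, g_2)\pi$ lies in $G_k$ iff $g_1 g_2 \in G_{k-1}$ iff $\mathrm{sgn}(g_1) = \mathrm{sgn}(g_2)$ iff $\mathrm{sgn}(g) = 1$, i.e.\ iff $g \in A_{2^k}$. The order argument from the first paragraph then finishes the theorem.

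The main obstacle is not conceptual but a bookkeeping issue: the parity of $\sigma$ depends on the parity of $2^{k-1}$, which genuinely flips between $k=1$ (where $\sigma = (1,2)$ is odd) and $k \geq 2$ (where it is a product of an even number of transpositions), so the base case has to be handled outside the induction. Care is also needed with the convention $g(ix) = \pi(i)\cdot g_i(x)$ when identifying which half of the leaves $g_1$ and $g_2$ act on after the root is permuted; everything else is straightforward consequences of the definitions.
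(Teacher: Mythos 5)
Your proof is correct and follows the same overall strategy as the paper's: realize $B_k$ as $Syl_2 S_{2^k}$ acting on the $2^k$ leaves, use $|G_k| = |B_k|/2 = |Syl_2 A_{2^k}|$ (the order lemma together with Legendre's formula), and reduce the theorem to showing that $G_k$ consists of even permutations, which in both arguments ultimately rests on the root swap $(e,e)\sigma$ acting on the leaves as $2^{k-1}$ disjoint transpositions and hence being even precisely when $k\geq 2$. The difference lies in how the evenness check is organized. The paper factors $g = (g_1 g_2, e)\cdot(g_2^{-1}, g_2)\cdot(e,e)\sigma^i$ and verifies each factor is even separately --- the middle one because its cycle type is that of $g_2$ doubled, the first by the induction hypothesis that $G_{k-1}$ sits inside $Alt(X^{k-1})$ --- obtaining only the inclusion $G_k \leq Alt(X^k)$ and letting the order count finish. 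You instead prove the multiplicative sign formula $\mathrm{sgn}\bigl((g_1,g_2)\pi\bigr) = \mathrm{sgn}(g_1)\,\mathrm{sgn}(g_2)$ via the decomposition $(g_1,g_2)\sigma = \sigma\cdot(g_2,g_1)$, and deduce the sharper identification $G_k = B_k \cap A_{2^k}$ directly; this makes the independence from $\pi$ and the role of the parity of $2^{k-1}$ completely transparent, and it packages the paper's three separate cycle-type observations into one homomorphism computation. Both are sound; yours proves a marginally stronger intermediate statement and is cleaner to check, while the paper's version avoids introducing the sign character explicitly and works purely with cycle decompositions.
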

\begin{proof} Group $C_2$ acts on the set $X = \{1,2\}$. Therefore we can recursively define sets $X^k$ on which group $B_k$ acts $X^1 = X,$  $X^k = X^{k-1} \times X \mbox{ for k>1}$.
At first we define $S_{2^k} = Sym(X^{k})$ and $A_{2^k} = Alt(X^{k})$ for all integer $k\geq 1$. Then  $ G_k <B_k < S_{2^k}$ and $A_{2^k} < S_{2^k}$.

We already know \cite{SkIrred} that $B_k \simeq Syl_2 (S_{2^k})$.
 Since $|A_{2^k}| = |S_{2^k}|/2$ therefore $|Syl_2 A_{2^k}| = |Syl_2 S_{2^k}|/2 = |B_k|/2$. By Lemma~\ref{order_of_G_k} it follows that $|Syl_2 A_{2^k}| = |G_k|$. Therefore it is left to show that $G_k < Alt(X^k)$.

Let us fix some $g = (g_1, g_2)\sigma^i$ where $g_1, g_2 \in B_{k-1}$, $i\in \{0, 1\}$ and $g_1 g_2 \in G_{k-1}$. Then we can represent $g$ as follows
\[
g = (g_1 g_2, e) \cdot (g_2\m, g_2) \cdot (e, e,)\sigma^i.
\]
In order to prove this theorem it is enough to show that $(g_1 g_2, e), (g_2\m, g_2),  (e, e,)\sigma \in Alt(X^k)$.

Element $(e, e,)\sigma$ just switch letters $x_1$ and $x_2$ for all $x \in X^{k}$. Therefore $(e, e,)\sigma$ is product of $|X^{k-1}| = 2^{k-1}$ transpositions and therefore $(e, e,)\sigma \in Alt(X^k)$.

Elements  $g_2\m$ and $g_2$ have the same cycle type. Therefore elements $(g_2\m, e)$ and $(e, g_2)$ also have the same cycle type. Let us fix the following cycle decompositions
\begin{align*}
(g_2\m, e) = \sigma_1 \cdot \ldots \cdot \sigma_{n},\\
(e, g_2) = \pi_1 \cdot \ldots \cdot \pi_{n}.
\end{align*}
Note that element $(g_2\m, e)$ acts only on letters like $x_1$ and element $(e, g_2)$ acts only on letters like $x_2$. Therefore we have the following cycle decomposition
\begin{align*}
(g_2\m, g_2) = \sigma_1 \cdot \ldots \cdot \sigma_{n} \cdot \pi_1 \cdot \ldots \cdot \pi_{n}.
\end{align*}
So, element $(g_2\m, g_2)$ has even number of odd permutations and then $(g_2\m, g_2) \in Alt(X^k)$.

Note that $g_1 g_2 \in G_{k-1}$ and $G_{k-1} = Alt(X^{k-1})$ by induction hypothesis.  Therefore $g_1g_2 \in Alt(X^{k-1})$. As elements $g_1 g_2$ and $(g_1 g_2, e)$ have the same cycle type then $(g_1 g_2, e) \in Alt(X^k)$.
\end{proof}

As it was proven by the author in \cite{SkIrred} Sylow 2-subgroup has structure $B_{k-1} \ltimes W_{k-1} $, where definition of $B_{k-1}$ is the same that was given in \cite{SkIrred}.

Recall that it was denoted
 by $W_{k-1}$ the subgroup of $Aut X^{[k]}$ such that has active states only on $ X^{k-1}$ and number of such states is even, i.e. $W_{k-1} \vartriangleleft  St_{G_k}(k-1)$ \cite{Ne}.
It was proven that the size of ${{W}_{k-1}}$ is equal to ${{2}^{{{2}^{k-1}}-1}},\,\,k > 1$ and its structure is $(C_2)^{{{2}^{k-1}}-1}$. The following structural theorem characterizing the group $G_k$ was proved by us \cite{SkIrred}.

\begin{theorem}
A maximal 2-subgroup of $Aut{{X}^{\left[ k \right]}}$ that acts by even permutations on ${{X}^{k}}$ has the structure of the semidirect product $G_k \simeq  B_{k-1} \ltimes W_{k-1} $ and isomorphic to $Syl_2A_{2^k}$.
\end{theorem}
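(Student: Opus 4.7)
The plan splits into two parts. The identification $G_k \simeq Syl_2 A_{2^k}$ is precisely the preceding theorem, so what remains is to establish the internal semidirect product structure $G_k \simeq B_{k-1} \ltimes W_{k-1}$. I would realize $B_{k-1}$ inside $Aut X^{[k]}$ via its natural embedding as automorphisms acting on the top $k-1$ levels and trivially below, so that $B_{k-1}$ just permutes the subtrees rooted at level $k-1$; and take $W_{k-1}$ as described in the paper: automorphisms with nontrivial states only at level $k-1$ and an even number of them.

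First I would check that both subgroups lie in $G_k$, i.e.\ act by even permutations on $X^k$. A vertex permutation at a level $j \leq k-2$ lifts to $2^{k-j-1}$ disjoint transpositions on $X^k$; since $k - j - 1 \geq 1$, this count is even, so every element of the embedded $B_{k-1}$ induces an even permutation on $X^k$. For $W_{k-1}$ the evenness is immediate from the defining parity condition.

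Next I would verify the three axioms of an internal semidirect product. Triviality of intersection $B_{k-1} \cap W_{k-1} = \{e\}$: elements of $B_{k-1}$ have trivial states at level $k-1$ by construction, while every nontrivial element of $W_{k-1}$ is active there. Normality $W_{k-1} \lhd G_k$: conjugation by any $g \in G_k$ sends a $W_{k-1}$-element to an automorphism still supported only at level $k-1$, and merely permutes the set of level-$k-1$ vertices carrying active states, preserving the parity of their number. Spanning $B_{k-1} W_{k-1} = G_k$: using the standard $|B_{k-1}| = 2^{2^{k-1}-1}$ and the recalled $|W_{k-1}| = 2^{2^{k-1}-1}$, the product of orders is $2^{2^k - 2}$, which matches $|G_k| = |B_k|/2$ from Lemma~\ref{order_of_G_k}; together with the trivial intersection this forces $G_k = B_{k-1} W_{k-1}$.

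The main subtle point is the normality check: when $g \in G_k$ has nontrivial action above level $k-1$, conjugation of a $W_{k-1}$-element not only rearranges its active vertices but could in principle conjugate the level-$k-1$ states themselves. However, each such state lies in $C_2$, so its conjugation is trivial, and only the rearrangement of the active vertex set matters. Since this rearrangement is a bijection of a finite set, the cardinality of active states, and hence its parity, is preserved, giving $W_{k-1} \lhd G_k$. The isomorphism with $Syl_2 A_{2^k}$ is then inherited directly from the preceding theorem.
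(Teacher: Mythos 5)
Your argument is correct, and it is in fact more complete than what the paper itself offers: the paper does not prove this theorem here but cites it from [SkIrred], adding only a one-line remark that $W_{k-1}$ lies in the level-$(k-1)$ stabilizer and is normal in $Aut X^{[k]}$ because conjugation preserves the cycle structure (hence the parity) of the active states, from which the semidirect decomposition is asserted. You use the same decomposition $G_k = B_{k-1}\ltimes W_{k-1}$ with $W_{k-1}$ as the normal factor, but you supply the three verifications the paper leaves implicit: (i) membership of both factors in $G_k$ via the count of $2^{k-j-1}$ transpositions induced on $X^k$ by a vertex permutation at level $j$ (even for $j\le k-2$, and handled by the parity condition for $j=k-1$); (ii) trivial intersection from disjoint supports; and (iii) spanning via $|B_{k-1}|\cdot|W_{k-1}| = 2^{2^{k-1}-1}\cdot 2^{2^{k-1}-1} = 2^{2^k-2} = |G_k|$, using Lemma~\ref{order_of_G_k}. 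Your treatment of the only delicate point — that conjugation of a level-$(k-1)$ state by an arbitrary $g\in G_k$ only transports the state (since each state lies in the abelian group $C_2$) and hence preserves the number of active vertices — is exactly the content of the paper's "conjugation keeps a cyclic structure" remark, made precise. The two routes agree in substance; yours has the advantage of being self-contained, while the paper's buys brevity by deferring to the earlier reference.
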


Note that ${{W}_{k-1}}$ is subgroup of stabilizer of  ${{X}^{k-1}}$ i.e. ${{W}_{k-1}}<St_{Aut{X}^{[k]}}(k-1)\lhd AutX^{[k]}$ and is normal too $W_{k-1}\lhd AutX^{[k]}$, because conjugation keeps a cyclic structure of permutation so even permutation maps in even. Therefore such conjugation induce an automorphism of ${W}_{k-1}$ and $G_k \simeq B_{k-1}\ltimes W_{k-1}$.


\begin{remark}
As a consequence, the structure founded by us in
\cite{SkIrred} fully consistent with the recursive group representation based on the concept of wreath recursion \cite{Lav}.
\end{remark}

\begin{theorem}\label{_comm_F_k_eq_[L_k,F_k]}
Elements of $\aut[k]'$ have the following form $\aut[k]'=\{[f,l]\mid f\in B_k, l\in G_k\}=\{[l,f]\mid f\in B_k, l\in G_k\}$.
\end{theorem}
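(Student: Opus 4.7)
The plan is to prove the statement by induction on $k$, using Corollary~\ref{c_2_wr_b_elem_repr} to realize an arbitrary $w\in B_k'$ as a single commutator, and then exploiting the freedom in the parameters to force one of the two commutator factors to land inside $G_k$. The base case $k=1$ is vacuous because $B_1=C_2$ is abelian, so $B_1'=\{e\}=[e,e]$ with $e\in G_1$. Throughout, the reverse inclusions $\{[l,f]\mid l\in G_k,\ f\in B_k\}\subseteq B_k'$ and $\{[f,l]\mid f\in B_k,\ l\in G_k\}\subseteq B_k'$ are immediate from the definition of the derived subgroup, so only the two nontrivial inclusions need to be established.

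For the inductive step, fix $w\in B_k'$. Lemma~\ref{form of comm} (with $p=2$, $B=B_{k-1}$) lets me write $w=(r_1,r_2)$ with $r_1r_2\in B_{k-1}'$. By the inductive hypothesis I may choose $f\in B_{k-1}$ and $g\in G_{k-1}$ with $r_1r_2=[f,g]$; equivalently $r_2=r_1\m[f,g]$. Feeding $(r_1,f,g)$ into Corollary~\ref{c_2_wr_b_elem_repr} yields
\[
w = [(e,a_{1,2})\sigma,\ (a_{2,1},a_{2,2})],
\]
where in particular $a_{1,2}=g^{a_{2,2}\m}$. Because $G_{k-1}\lhd B_{k-1}$ by Corollary~\ref{G_k_is_normal_in_B_k} and $g\in G_{k-1}$, the conjugate $a_{1,2}$ still lies in $G_{k-1}$. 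Consequently the product of the coordinates of the first commutator factor is $e\cdot a_{1,2}=a_{1,2}\in G_{k-1}$, which is exactly the defining membership condition for $G_k$. Setting $l:=(e,a_{1,2})\sigma\in G_k$ and $f':=(a_{2,1},a_{2,2})\in B_k$, we obtain $w=[l,f']$, giving the inclusion $B_k'\subseteq\{[l,f]\mid l\in G_k,\ f\in B_k\}$. The companion representation $w=[f,l]$ follows by applying the first part to $w\m\in B_k'$: writing $w\m=[l,f']$ and inverting gives $w=[f',l]$ with $f'\in B_k$, $l\in G_k$.

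The only genuinely delicate point, and the step I would check most carefully, is that the inductive hypothesis supplies $g\in G_{k-1}$ and not merely $g\in B_{k-1}$. Invoking Lemma~\ref{comm B_k} alone would give $r_1r_2=[f,g]$ with $g$ arbitrary in $B_{k-1}$, and then $a_{1,2}=g^{a_{2,2}\m}$ could sit outside $G_{k-1}$, pushing $l$ out of $G_k$ and breaking the argument. The normality of $G_{k-1}$ in $B_{k-1}$ is precisely what makes the conjugation in the formula for $a_{1,2}$ preserve the index-two condition, so the refined inductive hypothesis is indispensable for the induction to close. All other details are routine applications of the wreath-recursion calculus already developed in Lemmas~\ref{form of comm}--\ref{c_p_wr_b_elem_repr}.
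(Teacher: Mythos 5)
Your proof is correct and follows essentially the same route as the paper: induction on $k$, writing $w=(r_1,r_1\m[f,g])$ with $g\in G_{k-1}$ supplied by the inductive hypothesis, invoking Corollary~\ref{c_2_wr_b_elem_repr} to realize $w$ as a single commutator, and using the normality of $G_{k-1}$ in $B_{k-1}$ to place $(e,a_{1,2})\sigma$ in $G_k$. The "delicate point" you flag — that one needs $g\in G_{k-1}$ rather than merely $g\in B_{k-1}$ — is exactly the point the paper's proof also rests on.
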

\begin{proof}
It is enough to show either $B_k'=\{[f,l]\mid f\in B_k, l\in G_k\}$ or $B_k'=\{[l,f]\mid f\in B_k, l\in G_k\}$ because if $f = [g,h]$ then $f\m = [h,g]$.

We prove the proposition by induction on $k$. For the case $k=1$ we have $B_1' = \langle e \rangle$.

Consider case $k>1$.
According to Lemma~\ref{form of comm_2} and Corollary \ref{c_2_wr_b_elem_repr} every element $w\in\aut[k]'$ can be represented as
\begin{align*}
w=(r_1, r_1\m [f,g])
\end{align*}
for some $r_1,f\in \aut[k-1]$ and $ g\in \syl[k-1]$ (by induction hypothesis). By the Corollary~\ref{c_2_wr_b_elem_repr} we can represent $w$ as commutator of
\begin{align*}
(e,a_{1,2})\sigma \in \aut[k] \mbox{ and } (a_{2,1}, a_{2,2}) \in \aut[k],
\end{align*}
where
\begin{align*}
a_{2,1} &= (f\m)^{r_1\m},\\
a_{2,2} &= r_{1} a_{2,1},\\
a_{1,2} &= g^{a_{2,2}\m}.
\end{align*}
 If $g \in G_{k-1}$ then by the definition of $G_k$ and Corollary~\ref{G_k_is_normal_in_B_k} we obtain $(e,a_{1,2})\sigma \in \syl[k]$.
\end{proof}

\begin{remark}\label{_comm_B_k_eq_[b_k,b_k]}
Let us to note that Theorem~\ref{_comm_F_k_eq_[L_k,F_k]} improve Corollary~\ref{cw_syl_p_s_p_k_eq_1_and_syl_p_a_p_k_eq_1} for the case $Syl_2 S_{2^k}$.
\end{remark}


\begin{proposition}\label{B'_k and B^2_k}
If $g$ is an element of the group $B_k $ then $g^2 \in B'_{k}$.
\end{proposition}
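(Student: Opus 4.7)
The plan is to prove this by induction on $k$, using the recursive definition $B_k = B_{k-1} \wr C_2$ and the characterization of $B_k'$ given in Lemma~\ref{form of comm}, namely that an element $(r_1, r_2) \in B_k$ lies in $B_k'$ if and only if $r_1 r_2 \in B_{k-1}'$.

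The base case $k = 1$ is immediate since $B_1 = C_2$ and $g^2 = e$. For the inductive step, I would write a general $g \in B_k$ as a wreath recursion $g = (g_1, g_2)\sigma^i$ with $g_1, g_2 \in B_{k-1}$ and $i \in \{0,1\}$, and split into two cases based on whether the top-level permutation $\sigma^i$ is trivial. If $i = 0$, then $g^2 = (g_1^2, g_2^2)$, and by the inductive hypothesis each $g_j^2 \in B_{k-1}'$, so their product lies in $B_{k-1}'$ and Lemma~\ref{form of comm} gives $g^2 \in B_k'$. If $i = 1$, then a direct computation of the wreath product multiplication yields $g^2 = (g_1 g_2,\, g_2 g_1)$, and the product of the coordinates is $g_1 g_2 \cdot g_2 g_1$.

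The only step that is not immediate is verifying that $g_1 g_2 \cdot g_2 g_1 \in B_{k-1}'$ in the second case. The key observation is that modulo $B_{k-1}'$ the group $B_{k-1}$ is abelian, so $g_1 g_2 g_2 g_1 \equiv g_1^2 g_2^2 \pmod{B_{k-1}'}$, and by the inductive hypothesis both $g_1^2$ and $g_2^2$ lie in $B_{k-1}'$. Hence the product lies in $B_{k-1}'$, and Lemma~\ref{form of comm} again gives $g^2 \in B_k'$.

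There is no substantive obstacle; the proposition is essentially the statement that $B_k/B_k'$ is an elementary abelian $2$-group, which follows easily from the wreath-recursion characterization of $B_k'$ together with the inductive structure of $B_k$. The only care needed is the bookkeeping in the $i = 1$ case, where one must use that the product of coordinates may be taken in any order (as allowed by Lemma~\ref{form of comm}) to reduce to a product of squares in $B_{k-1}$.
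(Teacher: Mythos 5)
Your proof is correct and takes essentially the same approach as the paper's: induction on $k$ via the wreath recursion $B_k = B_{k-1}\wr C_2$, a case split on whether the top-level permutation is trivial, and in the nontrivial case the reduction $g_1g_2\cdot g_2g_1 \equiv g_1^2 g_2^2 \pmod{B_{k-1}'}$ combined with the induction hypothesis and Lemma~\ref{form of comm}. No substantive difference.
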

\begin{proof}
Induction on $k$. We  note that $B_k = B_{k-1} \wr C_2$. Therefore we fix some element
\[
g= (g_1, g_2)\sigma^i \in B_{k-1} \wr C_2,
\]
where $g_1, g_2 \in B_{k-1}$ and $i\in \{0, 1\}$. Let us to consider $g^2$ then two cases are possible:
\begin{align*}
g^2 = (g_1^2, g_2^2) \mbox{ or } g^2 = (g_1 g_2, g_2 g_1)
\end{align*}
In second case we consider a product of coordinates
$g_1 g_2 \cdot g_2 g_1 = g_1^2 g_2^2 x$. Since according to the induction hypothesis $g_i^2 \in B_k'$, $i\leq 2$ then $g_1 g_2 \cdot g_2 g_1 \in B_k'$ also according to Lemma~\ref{form of comm} $x\in B_k'$.
Therefore a following inclusion holds $(g_1 g_2, g_2 g_1) = g^2 \in B_k'$.
In first case the proof is even simpler because $g_1^2, g_2^2 \in B'$ by the induction hypothesis.
\end{proof}

\begin{lemma}\label{L_k_comm_criteria}
If an element $g=(g_1, g_2) \in G_k'$ then $g_1,g_2 \in G_{k-1}$ and $g_1g_2\in B_{k-1}'$.
\end{lemma}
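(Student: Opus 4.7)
The claim splits into two parts: (a) $g_1 g_2 \in B_{k-1}'$ and (b) $g_1, g_2 \in G_{k-1}$ individually. Part (a) is essentially free. By Proposition~\ref{comm_F_k_is_subgroup_of_L_k} we have $G_k' \subseteq B_k'$, so $g \in (B_{k-1} \wr C_2)'$, and Lemma~\ref{form of comm} applied to the pair $(g_1,g_2)$ yields $g_1 g_2 \in B_{k-1}'$ immediately.

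For part (b), the natural strategy is to construct a homomorphism from $G_k$ into an abelian group whose kernel forces the first coordinate of any stabiliser element to lie in $G_{k-1}$. I would define
\[
\phi : G_k \longrightarrow \bigl(B_{k-1}/G_{k-1}\bigr) \times C_2, \qquad (h_1, h_2)\pi \mapsto (h_1 G_{k-1},\, \pi),
\]
where the quotient $B_{k-1}/G_{k-1}$ makes sense because $G_{k-1} \lhd B_{k-1}$ by Corollary~\ref{G_k_is_normal_in_B_k}, and has order two. The codomain is then isomorphic to $C_2 \times C_2$ and hence abelian, so once $\phi$ is shown to be a homomorphism we get $G_k' \subseteq \ker \phi$, which forces $g_1 \in G_{k-1}$ for any $g = (g_1, g_2) \in G_k'$; the other coordinate then satisfies $g_2 = g_1^{-1}(g_1 g_2) \in G_{k-1}$, since $g_1 g_2 \in G_{k-1}$ is part of the definition of $G_k$.

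The main obstacle is verifying that $\phi$ is actually a homomorphism. Using the wreath recursion multiplication rule, the only nontrivial case is when the active permutation of the left factor is the swap; there one has to show that for any $y=(y_1,y_2)\pi_y \in G_k$ the first and second coordinates agree modulo $G_{k-1}$, i.e.\ that $y_1^{-1} y_2 \in G_{k-1}$. Writing $y_1^{-1} y_2 = y_1^{-2}\cdot(y_1 y_2)$, the factor $y_1 y_2$ lies in $G_{k-1}$ by membership of $y$ in $G_k$, while $y_1^2 \in B_{k-1}'$ by Proposition~\ref{B'_k and B^2_k}, and $B_{k-1}' \subseteq G_{k-1}$ again by Proposition~\ref{comm_F_k_is_subgroup_of_L_k}. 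Hence both factors lie in $G_{k-1}$, so $\phi$ is a well-defined homomorphism, and the two desired conclusions follow as described.
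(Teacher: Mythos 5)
Your proof is correct, but it takes a genuinely different route from the paper's. For the claim $g_1g_2\in B_{k-1}'$ both arguments are identical (via Proposition~\ref{comm_F_k_is_subgroup_of_L_k} and Lemma~\ref{form of comm}). For the claim $g_1,g_2\in G_{k-1}$, the paper argues by direct computation: it takes a single commutator $[f,h]$ of generic elements $f=(f_1,f_2)\sigma$, $h=(h_1,h_2)\pi$ of $G_k$, writes the first coordinate as $g_1=f_1h_if_j\m h_k\m$, and rearranges it into a product $(f_1f_j)(h_ih_k)x(f_j\m h_k\m)^2$ whose factors lie in $G_{k-1}$ by the definition of $G_k$, by Proposition~\ref{B'_k and B^2_k} (squares lie in the derived subgroup) and by Proposition~\ref{comm_F_k_is_subgroup_of_L_k}; it must then separately verify that the property survives taking products of commutators. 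You instead package the same two ingredients into a homomorphism $\phi\colon G_k\to(B_{k-1}/G_{k-1})\times C_2$ onto an abelian group, whose key verification ($y_1\m y_2=y_1^{-2}(y_1y_2)\in G_{k-1}$ for $(y_1,y_2)\pi_y\in G_k$) uses exactly Proposition~\ref{B'_k and B^2_k} and Proposition~\ref{comm_F_k_is_subgroup_of_L_k} again; abelianness of the codomain then kills all of $G_k'$ at once, so the closure-under-products step that the paper handles by hand comes for free. Your version is cleaner and avoids the case analysis over the indices $i,j,k$; the paper's version is more elementary in that it never forms the quotient $B_{k-1}/G_{k-1}$, though it does rely on Corollary~\ref{G_k_is_normal_in_B_k} implicitly in the same places you rely on normality of $G_{k-1}$ in $B_{k-1}$.
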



\begin{proof}
As $B_k' < G_k$ therefore it is enough to show that  $g_1 \in G_{k-1}$ and $g_1g_2\in B_{k-1}'$.  Let us fix some $g=(g_1, g_2) \in G_k' < B_k'$. Then Lemma~\ref{form of comm} implies that $g_1 g_2 \in B_{k-1}'$.

In order to show that $g_1\in G_{k-1}$ we firstly consider just one commutator of arbitrary elements from $G_k$
\begin{align*}
f =(f_1, f_2)\sigma, \,\, h = (h_1, h_2)\pi \in G_k,
\end{align*}
where $f_1, f_2, h_1, h_2 \in B_{k-1}$, $\sigma, \pi \in C_2$. The definition of $G_k$ implies that $f_1 f_2, h_1 h_2 \in G_{k-1}$.

If $g= (g_1, g_2) = [f, h]$ then
\[
g_1  = f_1 h_{i} f_{j}\m h_{k}\m
\]
for some $i, j, k \in \{1,2\}$.
 Then
\begin{align*}
g_1 = f_1 h_{i} f_{j} (f_{j}\m)^2 h_{k} (h_{k}\m)^2  = (f_1 f_{j}) (h_{i} h_{k}) x (f_{j}\m h_{k}\m)^{2},
\end{align*}
where $x$ is product of commutators of $f_i, \, h_j$ and $f_i, \, h_k$, hence $x \in B_{k-1}'$.

It is enough to consider first product $f_1 f_{j}$.
 If $j=1$ then $f_1^2\in B'_{k- 1}$ by Proposition~\ref{B'_k and B^2_k} if $j=2$ then $f_1f_2 \in G_{k-1}$ according to definition of $G_k$, the same is true for $h_i h_k$. Thus, for any $i, j, k$ it holds $f_1 f_j, h_i h_k \in G_{k-1}$.  Besides that a square $(f_{j}\m h_{k}\m)^{2} \in B'_k$ according to Proposition~\ref{B'_k and B^2_k}.
Therefore $g_1 \in G_{k-1}$ because of Proposition~\ref{B'_k and B^2_k} and Proposition~\ref{comm_F_k_is_subgroup_of_L_k}, the same is true for $g_2 $.

Now it lefts to consider the product of some $f =(f_1, f_2), h = (h_1, h_2)$, where $f_1, h_1 \in G_{k-1}$, $f_1 h_1 \in G_{k-1}$ and $f_1 f_2, h_1 h_2 \in B_{k-1}'$
\begin{align*}
fh = (f_1 h_1, f_2 h_2).
\end{align*}

Since $f_1 f_2, h_1 h_2 \in B_{k-1}'$ by imposed condition in this item and taking into account that $f_1 h_1 f_2 h_2 = f_1 f_2 h_1 h_2 x$ for some $x\in B_{k-1}'$ then $f_1 h_1 f_2 h_2 \in B_{k-1}'$ by Lemma \ref{form of comm}. Other words closedness by multiplication holds and so according Lemma\ref{form of comm} we have element of commutator $G'_k$.
\end{proof}

In the following theorem we prove $2$ facts at once.
\begin{theorem}\label{_comm_G_k_eq_[G_k,G_k]}
The following statements are true.
\begin{itemize}
\item[1.] An element $g = (g_1, g_2) \in G_k'$ iff $g_1,g_2 \in G_{k-1}$ and $g_1g_2\in B_{k-1}'$.
\item[2.] Commutator subgroup $G'_k$ coincides with set of all commutators for $k\geq 1$
\[
\syl[k]'=\{[f_1,f_2]\mid f_1\in \syl[k], f_2\in \syl[k]\}.
\]
\end{itemize}

\end{theorem}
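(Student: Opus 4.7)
\medskip

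\textbf{Proof plan.} I would prove both items by simultaneous induction on $k$. The base case $k=1$ is trivial since $G_1 = \{e\}$. For $k \geq 2$ the forward implication of item~1 is already furnished by Lemma~\ref{L_k_comm_criteria}, so the real work is to establish item~2, and the reverse implication of item~1 will then fall out for free: any $(g_1,g_2)$ with $g_1,g_2\in G_{k-1}$ and $g_1g_2\in B_{k-1}'$ that is explicitly exhibited as a commutator of two elements of $G_k$ obviously lies in $G_k'$.

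The strategy for item~2 is to take $g=(g_1,g_2)\in G_k'$, note by Lemma~\ref{L_k_comm_criteria} that $g_1,g_2\in G_{k-1}$ and $x:=g_1g_2\in B_{k-1}'$, and then feed this data into the explicit commutator recipe of Corollary~\ref{c_2_wr_b_elem_repr}. To do this we first invoke Theorem~\ref{_comm_F_k_eq_[L_k,F_k]} at level $k-1$ to write $x=[f,l]$ with $f\in B_{k-1}$ and $l\in G_{k-1}$. Then $g=(g_1,\, g_1\m[f,l])$, and Corollary~\ref{c_2_wr_b_elem_repr} hands us
\[
g \;=\; \bigl[(e,a_{1,2})\sigma,\ (a_{2,1},a_{2,2})\bigr],
\]
with $a_{2,1}=(f\m)^{g_1\m}$, $a_{2,2}=g_1 a_{2,1}=f\m g_1$, and $a_{1,2}=l^{a_{2,2}\m}$.

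The decisive step is verifying that both factors actually lie in $G_k$, not merely in $B_k$. For $(e,a_{1,2})\sigma$ this is easy: since $l\in G_{k-1}$ and $G_{k-1}\lhd B_{k-1}$ by Corollary~\ref{G_k_is_normal_in_B_k}, conjugation preserves $G_{k-1}$, so $a_{1,2}\in G_{k-1}$ and hence $(e,a_{1,2})\sigma\in G_k$. For $(a_{2,1},a_{2,2})$ one must check $a_{2,1}a_{2,2}\in G_{k-1}$; direct computation gives
\[
a_{2,1}a_{2,2} \;=\; g_1\m f\m g_1 \cdot f\m g_1 \;=\; g_1\m (f\m g_1)^{2}.
\]
By Proposition~\ref{B'_k and B^2_k} the square $(f\m g_1)^2$ lies in $B_{k-1}'$, and by Proposition~\ref{comm_F_k_is_subgroup_of_L_k} we have $B_{k-1}'<G_{k-1}$; together with $g_1\m\in G_{k-1}$ this puts the product inside $G_{k-1}$, so $(a_{2,1},a_{2,2})\in G_k$ as required.

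The main obstacle is precisely this last verification: Corollary~\ref{c_2_wr_b_elem_repr} was proved for the ambient group $B_k$ and its output is a priori only in $B_k\times B_k$, so the argument only closes because of the nontrivial input that arbitrary squares in $B_{k-1}$ land in $B_{k-1}'\subset G_{k-1}$. Once this is in hand, the construction simultaneously delivers item~2 and the reverse direction of item~1, completing the induction.
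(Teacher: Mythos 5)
Your proposal is correct and follows essentially the same route as the paper's proof: forward direction of item 1 from Lemma~\ref{L_k_comm_criteria}, then writing $g_1g_2=[f,l]$ with $f\in B_{k-1}$, $l\in G_{k-1}$ via Theorem~\ref{_comm_F_k_eq_[L_k,F_k]}, feeding this into Corollary~\ref{c_2_wr_b_elem_repr}, and checking membership of both factors in $G_k$ via normality of $G_{k-1}$ and Propositions~\ref{B'_k and B^2_k} and~\ref{comm_F_k_is_subgroup_of_L_k}. Your simplification $a_{2,1}a_{2,2}=g_1\m(f\m g_1)^2$ is a marginally cleaner version of the paper's rewriting $r_1[r_1,a_{2,1}]a_{2,1}^2$, but rests on the same ingredients.
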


\begin{proof}
For the case $k=1$ we have $G_1' = \langle e \rangle$. So, further we consider the case $k\geq 2$.

Sufficiency of the first statement of this theorem follows from the Lemma~\ref{L_k_comm_criteria}. So, in order to prove necessity of the both statements it is enough to show that element
\begin{align*}
w=(r_1, r_1\m x),
\end{align*}
where $r_1  \in G_{k-1}$ and $ x \in B'_{k-1}$, can be represented as a commutator of elements from $G_k$.
%
%
By Proposition~\ref{_comm_F_k_eq_[L_k,F_k]} 
we have $ x = [f,g]$ for some $f\in\aut[k-1]$ and $g\in\syl[k-1]$. Therefore
\begin{align*}
w=(r_1, r_1\m [f,g]).
\end{align*}

By the Corollary~\ref{c_2_wr_b_elem_repr} we can represent $w$ as a commutator of
\begin{align*}
(e,a_{1,2})\sigma \in \aut[k] \mbox{ and } (a_{2,1}, a_{2,2}) \in \aut[k],
\end{align*}
where $a_{2,1} = (f\m)^{r_1\m}, a_{2,2} = r_{1} a_{2,1}, a_{1,2} = g^{a_{2,2}\m}.$
It only lefts to show that  $(e,a_{1,2})\sigma, \\ (a_{2,1}, a_{2,2}) \in G_k$.
Note the following 
\begin{align*}
a_{1,2} = g^{a_{2,2}\m} &\in G_{k-1}\mbox{ by Corollary ~\ref{G_k_is_normal_in_B_k}}.\\
a_{2,1} a_{2,2} = a_{2,1} r_1 a_{2,1} = r_1 [r_1, a_{2,1}] a_{2,1}^2 &\in \syl[k-1]\mbox{ by Proposition~\ref{comm_F_k_is_subgroup_of_L_k} and Proposition~\ref{B'_k and B^2_k}}.
\end{align*} 
So we have $(e,a_{1,2})\sigma \in \syl[k]$ and $(a_{2,1}, a_{2,2}) \in \syl[k]$ by the definition of $G_k$.
\end{proof} 

\begin{proposition}\label{g_sq_in_G_k}
For arbitrary $g\in G_k$ the inclusion $g^2\in G_k'$ holds.
\end{proposition}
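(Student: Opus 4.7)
The plan is to apply the characterization of $G_k'$ just established in Theorem~\ref{_comm_G_k_eq_[G_k,G_k]}, which says that $(a,b)\in G_k'$ iff $a,b\in G_{k-1}$ and $ab\in B_{k-1}'$. So, given $g\in G_k$, I would write $g=(g_1,g_2)\sigma^i$ with $g_1,g_2\in B_{k-1}$, $g_1g_2\in G_{k-1}$, and $i\in\{0,1\}$, compute $g^2$ explicitly in each case, and verify the two conditions. The base case $k=1$ is trivial since $G_1=\langle e\rangle$.

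For $i=0$, I would simply observe that $g^2=(g_1^2,g_2^2)$. By Proposition~\ref{B'_k and B^2_k} each $g_j^2\in B_{k-1}'$, and since $B_{k-1}'\leq G_{k-1}$ by Proposition~\ref{comm_F_k_is_subgroup_of_L_k}, both coordinates lie in $G_{k-1}$. Their product $g_1^2g_2^2$ is a product of two elements of $B_{k-1}'$, hence lies in $B_{k-1}'$. So $g^2\in G_k'$ by Theorem~\ref{_comm_G_k_eq_[G_k,G_k]}.

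For $i=1$, one has $g^2=(g_1g_2,\,g_2g_1)$. The first coordinate lies in $G_{k-1}$ by the definition of $G_k$. For the second, I would write $g_2g_1=[g_2,g_1]\cdot(g_1g_2)$; the commutator $[g_2,g_1]\in B_{k-1}'\leq G_{k-1}$, so $g_2g_1\in G_{k-1}$ as well. For the product condition, I would rewrite
\[
(g_1g_2)(g_2g_1)=g_1g_2^2g_1=g_1^2\cdot(g_1^{-1}g_2^2g_1),
\]
and conclude using Proposition~\ref{B'_k and B^2_k} together with the fact that $B_{k-1}'$ is normal in $B_{k-1}$: both $g_1^2$ and $g_1^{-1}g_2^2g_1$ lie in $B_{k-1}'$, hence so does the product.

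There is no genuine obstacle here; the one point to watch is the $i=1$ case, where the two coordinates of $g^2$ are not individually forced to lie in $G_{k-1}$ from the wreath-recursion condition on $g$ alone, and one must invoke Proposition~\ref{B'_k and B^2_k} together with normality of $B_{k-1}'$ in $B_{k-1}$ to repair this. Once Theorem~\ref{_comm_G_k_eq_[G_k,G_k]} is in hand the statement is essentially a bookkeeping exercise, and no induction on $k$ is required.
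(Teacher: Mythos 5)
Your proof is correct and takes essentially the same route as the paper's: the same split of $g^2$ into the cases $(g_1^2,g_2^2)$ and $(g_1g_2,g_2g_1)$, the same appeal to Proposition~\ref{B'_k and B^2_k} and to the inclusion $B_{k-1}'<G_{k-1}$ from Proposition~\ref{comm_F_k_is_subgroup_of_L_k}, and the same commutator manipulations to verify the membership criterion of Theorem~\ref{_comm_G_k_eq_[G_k,G_k]}. The only cosmetic difference is that the paper nominally frames the argument as an induction on $k$, while you correctly note that once Theorem~\ref{_comm_G_k_eq_[G_k,G_k]} and Proposition~\ref{B'_k and B^2_k} are in hand no induction hypothesis is actually invoked.
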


\begin{proof}
Induction on $k$:  elements of $G^2_1$ have form $ (\sigma)^2 = e $, where $\sigma = (1,2)$, so the statement holds. In general case, when $k>1$, the elements of $G_k$ have the form $g = (g_1, g_2) \sigma^i,  \, g_1, g_2 \in B_{k-1}, \, i \in \{0,1\}$. Then we have two possibilities:  $g^2 = (g_1^2, g_2^2)\mbox{ or } g^2 = (g_1g_2, g_2g_1).$

Firstly we show that $g_1^2 \in G_{k-1},
g_2^2 \in G_{k-1}.$
 According to Proposition \ref{B'_k and B^2_k}, we have $g_1^2, g_2^2 \in B_{k-1}'$ and according to Proposition~\ref{comm_F_k_is_subgroup_of_L_k}, we have $B_{k-1}' < G_{k-1}$ then using Theorem \ref{_comm_G_k_eq_[G_k,G_k]}  $g^2=(g_1^2, g_2^2)  \in G_{k}$.

Consider the second case $g^2 = (g_1g_2, g_2g_1)$.
  Since $g\in G_k$, then, according to the definition of $G_k$ we have that $g_1 g_2  \in G_{k-1}$.  By Proposition~\ref{comm_F_k_is_subgroup_of_L_k}, and definition of $G_k$, we obtain
\begin{gather*}
 g_2g_1 = g_1g_2 g_2\m g_1\m g_2 g_1 = g_1g_2 [g_2\m, g_1\m]\in G_{k-1}, \\ 
g_1g_2 \cdot g_2 g_1 = g_1 g_2^2 g_1 = g_1^2 g_2^2 [g_2^{-2}, g_1\m] \in B_{k-1}'.
\end{gather*}
Note that $g_1^2, g_2^2 \in B'_{k-1}$ according to Proposition \ref{B'_k and B^2_k}, then $g_1^2 g_2^2 [g_2^{-2}, g_1\m] \in B_{k-1}'$. 
Since $g_1g_2 \cdot g_2 g_1 \in B'_{k-1}$ and $g_1g_2,  g_2 g_1 \in G_{k-1}$, then, according to Lemma \ref{L_k_comm_criteria}, we obtain $g^2 = (g_1g_2, g_2g_1)\in G'_k$.
\end{proof}

\begin{statment}\label{FrPr}
The commutator subgroup is a subgroup of $G_{k}^{2}$  i.e. $G{{'}_{k}}<G_{k}^{2}$.
\end{statment}

\begin{proof}
Indeed, an arbitrary commutator presented as product of squares. Let  $a,\,\,b\in G$ and set that
$x=a,\,\,\,y={{a}^{-1}}ba,\,\,\,z={{a}^{-1}}{{b}^{-1}}$. Then ${{x}^{2}}{{y}^{2}}{{z}^{2}}={{a}^{2}}{{({{a}^{-1}}ba)}^{2}}{{({{a}^{-1}}{{b}^{-1}})}^{2}}=ab{{a}^{-1}}{{b}^{-1}},\,\,$
in more detail:
${{a}^{2}}{{({{a}^{-1}}ba)}^{2}}{{({{a}^{-1}}{{b}^{-1}})}^{2}}={{a}^{2}}{{a}^{-1}}ba\,{{a}^{-1}}ba\,\,{{a}^{-1}}{{b}^{-1}}{{a}^{-1}}{{b}^{-1}}= \\ = abb{{b}^{-1}}{{a}^{-1}}{{b}^{-1}}=\left[ a,b \right]$.
In such way we obtain all commutators and their products.
Thus, we generate by squares the whole $G{{'}_{k}}$.
\end{proof}

\begin{corollary}\label{Fr}
For the Syllow subgroup $(Syl_2 A_{2^k})$ the following equalities $Syl'_2 A_{2^k}=(Syl_2 A_{2^k})^2$, $\Phi(Syl_2 A_{2^k}) = Syl'_2 A_{2^k}$, that are characteristic properties of special p-groups \cite{DWard}, are true.

\end{corollary}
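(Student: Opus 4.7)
The plan is to chain together the two preceding results that were just proved in tandem, and then invoke the standard Burnside basis theorem for finite $p$-groups. Throughout I write $P = Syl_2 A_{2^k} \simeq G_k$ and recall that $G_k^2$ denotes the subgroup generated by all squares.

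First I would establish $G_k^2 = G_k'$ by a two-way inclusion. The inclusion $G_k^2 \leq G_k'$ is immediate from Proposition~\ref{g_sq_in_G_k}: every square $g^2$ with $g \in G_k$ already lies in $G_k'$, so the subgroup they generate does as well. The reverse inclusion $G_k' \leq G_k^2$ is exactly the content of Statement~\ref{FrPr}: each commutator $[a,b]$ was written there as $x^2 y^2 z^2$ with $x,y,z \in G_k$, hence every commutator, and therefore every element of the commutator subgroup, is a product of squares of elements of $G_k$. Combining the two inclusions gives $Syl_2'A_{2^k} = (Syl_2 A_{2^k})^2$.

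Second, I would deduce the Frattini identity from the Burnside basis theorem. For a finite $p$-group $P$ one has $\Phi(P) = P'\, P^p$; specializing to $p=2$ gives $\Phi(Syl_2 A_{2^k}) = (Syl_2 A_{2^k})'\cdot(Syl_2 A_{2^k})^2$. By the equality just obtained, the second factor is contained in the first, so the product collapses to $\Phi(Syl_2 A_{2^k}) = Syl_2'A_{2^k}$.

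Neither step poses a real obstacle — the main work has already been done in Proposition~\ref{g_sq_in_G_k} and Statement~\ref{FrPr}, and the Frattini part is a one-line appeal to a classical fact. The only point that deserves a word of care is the observation that $(Syl_2 A_{2^k})^2$ in the statement must be read as the subgroup generated by squares (not the set of squares); both inclusions above work at the level of this generated subgroup, since squares generate one side and commutators generate the other.
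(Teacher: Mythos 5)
Your proposal is correct and follows essentially the same route as the paper: both directions of the equality $G_k^2 = G_k'$ are obtained from Proposition~\ref{g_sq_in_G_k} and Statement~\ref{FrPr} respectively, and the Frattini identity then follows from the classical fact $\Phi(P)=P'P^p$ for a finite $p$-group. Your explicit remark that $(Syl_2 A_{2^k})^2$ must be read as the subgroup generated by squares is a helpful clarification the paper leaves implicit, but it does not change the argument.
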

\begin{proof}
As well known, for an arbitrary group (also by  Statement \ref{FrPr})  the following  embedding  $G'\triangleleft {{G}^{2}}$  holds.
In view of the above Proposition \ref{g_sq_in_G_k}, a reverse embedding for $G_k$ is true.
Thus, the group $Syl_2 A_{2^k}$ has some properties of special $p$-groups that is $P'= \Phi(P)$ \cite{DWard} because $G_k^2=G'_k$ and so Frattini subgroup $\Phi(Syl_2 A_{2^k})=Syl'_2 (A_{2^k})$.
\end{proof}


\begin{corollary}
Commutator width of the group $Syl_2 A_{2^k}$ equals to $1$ for $k\geq 2$.
\end{corollary}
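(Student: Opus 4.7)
The plan is to derive the corollary as an essentially immediate consequence of Theorem~\ref{_comm_G_k_eq_[G_k,G_k]}. By the earlier identification $G_k \simeq Syl_2 A_{2^k}$ it suffices to prove $cw(G_k) = 1$. The second clause of Theorem~\ref{_comm_G_k_eq_[G_k,G_k]} asserts
\[
G_k' = \{[f_1, f_2] \mid f_1, f_2 \in G_k\},
\]
so by the definition of commutator length given in the Preliminaries every element of $G_k'$ has commutator length at most $1$. Taking the maximum over $G_k'$ yields the upper bound $cw(G_k) \leq 1$.

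For the matching lower bound I would exhibit a non-identity element of $G_k'$. For $k \geq 3$ this is immediate from the semidirect decomposition $G_k \simeq B_{k-1} \ltimes W_{k-1}$ recalled in the structural theorem above: since $B_{k-1}$ contains the non-abelian group $B_2 \simeq D_4$ whenever $k \geq 3$, the group $G_k$ itself is non-abelian, so $G_k' \neq \{e\}$ and $cw(G_k) \geq 1$. Combined with the upper bound this forces $cw(G_k) = 1$.

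The main obstacle has already been absorbed into the preceding machinery: the wreath-recursion form of a commutator (Lemma~\ref{form of comm_2}), the explicit one-commutator witness (Corollary~\ref{c_2_wr_b_elem_repr}), and, crucially, the verification inside the proof of Theorem~\ref{_comm_G_k_eq_[G_k,G_k]} that both factors of that witness can be chosen to lie in $G_k$ rather than only in the ambient group $B_k$. At this stage the corollary genuinely is a one-line deduction, and I do not foresee any further genuine difficulty beyond citing Theorem~\ref{_comm_G_k_eq_[G_k,G_k]}.
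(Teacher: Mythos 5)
Your proposal takes essentially the same route as the paper, which simply observes that the corollary is immediate from item 2 of Theorem~\ref{_comm_G_k_eq_[G_k,G_k]}; your added lower-bound paragraph is a harmless (and correct) supplement. Note only that your lower bound rightly works for $k\geq 3$: for $k=2$ the group $G_2\simeq Syl_2 A_4$ is the abelian Klein four group, so its commutator width is $0$ under the paper's own convention, and the stated range $k\geq 2$ should really be $k\geq 3$.
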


It immediately follows from item 2 of Theorem \ref{_comm_G_k_eq_[G_k,G_k]}.

\end{section}

\section{Minimal generating set}
For the construction of minimal generating set we used the representation of elements of group $G_k$ by portraits of automorphisms at restricted binary tree $Aut X^k$.
 For convenience we will identify elements of $G_k$ with its faithful representation by portraits of automorphisms from $AutX^{[k]}$.

We denote by $A|_l$ a set of all functions
$a_l$, such, that
$[\varepsilon,\ldots,\varepsilon,a_l,\varepsilon,\ldots]\in
[A]_l$.
Recall that, according to \cite{SysB}, $l$-coordinate subgroup $U<G$ is the following subgroup.

\begin{definition}  For an arbitrarry $k\in \mathbb{N}$ we call a $k-$\emph{coordinate} subgroup $U<G$  a subgroup, which is determined by $k$-coordinate sets $[U]_l$, $l\in \mathbb{N}$, if this subgroup consists of all Kaloujnine's tableaux $a\in I$
for which $[a]_l \in [U]_l$.
\end{definition}

We denote by ${{G}_{k}}(l)$ a level subgroup of $G_k$, which consists of the tuples of v.p. from ${{X}^{l}}$, $l<k-1$ of any $\alpha \in G_k$.
We denote as ${{G}_{k}}(k-1)$ such subgroup of $G_k$ that is generated by v.p., which are located on ${{X}^{k-1}}$ and isomorphic to ${{W}_{k-1}}$. Note that ${{G}_{k}}(l)$ is in bijective correspondence (and isomorphism) with $l$-coordinate subgroup $[U]_l$ \cite{SysB}.

For any v.p. ${{g}_{li}}$ in ${{v}_{li}}$ of ${{X}^{l}}$ we set in correspondence with ${{g}_{li}}$ the permutation $\varphi \left( {{g}_{li}} \right)\in {{S}_{2}}$ by the following  rule:

\begin{equation}\label{hom}
\varphi ({{g}_{li}}) = \left\{
\begin{array}{rl}
 (1,2), & \ \  \mbox{if} \ \ {{g}_{li}}\ne e,\\
     e, & \ \  \mbox{if} \ \ {{g}_{li}}=e.
\end{array} \right.
\end{equation}
					

Define a homomorphic map from ${{G}_{k}}(l)$ onto ${{S}_{2}}$ with the kernel consisting of all products of even number of transpositions that belongs to ${{G}_{k}}(l)$. For instance, the element $(12)(34)$ of $G_k (2)$ belongs to $ ker\varphi $.
Hence, $\varphi \left( {{g}_{li}} \right)\in {{S}_{2}}$.

\begin{definition}
We define the subgroup of $l$-th level as a subgroup generated by all possible vertex permutation of this level.
\end{definition}

\begin{statment}\label{comm_str1}
In ${{G}_{k}}'$, the following $k$ equalities are true:
\begin{gather} \label{parity}
\prod\limits_{l=1}^{{{2}^{l}}}{\varphi ({{g}_{lj}})}=e ,\,\,\,\,0\le \,l<k-1.
\end{gather}

For the case $i=k-1$,
the following condition holds:

\begin{gather}\label{ident}
\prod\limits_{j=1}^{{{2}^{k-2}}}{\varphi ({{g}_{k-1j}})}=\prod\limits_{j={{2}^{k-2}}+1}^{{{2}^{k-1}}} \varphi ({{g}_{k-1j}})=e.   		
 \end{gather}

Thus, $G{{'}_{k}}$ has $k$ new conditions on a combination of level subgroup elements, except for the condition of last level parity from the original group.
\end{statment}

\begin{proof}
Note that the condition (\ref{parity}) is compatible with that were founded by R. Guralnik in \cite{Gural_2010}, because as it was proved by author \cite{SkIrred} ${{G}_{k-1}}\simeq {{B}_{k-2}}\rtimes {{\mathcal{W}}_{k-1}}$, where ${{B}_{k-2}}\simeq \underset{i=1}{\overset{k-2}{\mathop{\wr }}}\,C_{2}^{(i)}$.

 According to Property \ref{FrPr}, $G{{'}_{k}} \leq G_{k}^{2}$, so it is enough to prove the statement for the elements of $G_{k}^{2}$. Such elements, as it was described above, can be presented in the form  $s= ({{s}_{l1}},...,{{s}_{l{{2}^{l}}}})\sigma $,  where $\sigma \in G_{l-1}^{{}}$ and ${{s}_{li}}$ are states of $s\in {{G}_{k}}$ in ${{v}_{li}}$, $i\le {{2}^{l}}$.
For convenience we will make the transition from the tuple $({{s}_{l1}},...,{{s}_{l{{2}^{l}}}})$ to the tuple $({{g}_{l1}},...,{{g}_{l{{2}^{l}}}})$.  Note that there is the trivial vertex permutation $g_{lj}^{2}=e$ in the product of the states ${{s}_{lj}}\cdot {{s}_{lj}}$.

Since in $G{{'}_{k}}$ v.p. on ${{X}^{0}}$ are trivial,  so $\sigma $ can be decomposed as $\sigma =\left( {{\sigma}_{11}},\,{{\sigma }_{21}} \right)$, where ${{\sigma }_{21}},\,{{\sigma }_{22}}$ are root permutations in ${{v}_{11}}$ and ${{v}_{12}}$.

Consider the square of $s$. So we calculate squares ${{\left( \left( {{s}_{l1}},{{s}_{l2}},...,{{s}_{l{{2}^{l-1}}}} \right)\sigma  \right)}^{2}}$. The condition (\ref {parity}) is equivalent to the condition that ${{s}^{2}}$ has even index on each level. Two cases are feasible: if permutation $\sigma =e$, then ${{\left( \left( {{s}_{l1}},{{s}_{l2}},...,{{s}_{l{{2}^{l-1}}}} \right)\sigma  \right)}^{2}}=\left( s_{l1}^{2},s_{l2}^{2},...,s_{l{{2}^{l-1}}}^{2} \right)e$, so after the transition  from $\left( s_{l1}^{2},s_{l2}^{2},...,s_{l{{2}^{l-1}}}^{2} \right)$ to $\left( g_{l1}^{2},g_{l2}^{2},...,g_{l{{2}^{l-1}}}^{2} \right)$, we get a tuple of trivial permutations $\left( e,\,\,...\,\,,e \right)$ on ${{X}^{l}}$, because $g_{lj}^{2}=e$.  In general case, if $\sigma \ne e$,  after such transition we obtain $\left( {{g}_{l1}}{{g}_{l\sigma (2)}},\,\,...\,\,,\,{{g}_{l{{2}^{l-1}}}}{{g}_{l\sigma ({{2}^{l-1}})}} \right)\sigma _{{}}^{2}$. Consider the product of form
\begin{gather}\label{multres}
\prod\limits_{j=1}^{{{2}^{l}}}{\varphi ({{g}_{lj}}{{g}_{l\sigma (j)}})},
\end{gather}
					
where $\sigma $ and ${{g}_{li}}{{g}_{l\sigma (i)}}$  are from $\left( {{g}_{l1}}{{g}_{l\sigma (2)}},\,\,...\,\,,\,{{g}_{l{{2}^{l-1}}}}{{g}_{l\sigma ({{2}^{l-1}})}} \right)\sigma _{{}}^{2}$.

Note that
 each element ${{g}_{lj}}$ occurs twice in (\ref{multres}) regardless of the permutation $\sigma $, therefore considering commutativity of homomorphic images $\varphi ({{g}_{lj}}),\,\,\,1\le j\le {{2}^{l}}$ we conclude that
$\prod\limits_{j=1}^{{{2}^{l}}}{\varphi ({{g}_{lj}}{{g}_{l\sigma (j)}})}=\prod\limits_{j=1}^{{{2}^{l}}}{\varphi (g_{lj}^{2})=e},$ because of $g_{lj}^{2}=e$. We rewrite $\prod\limits_{j=1}^{{{2}^{l}}}{\varphi (g_{lj}^{2})=e}$ as characteristic condition:
$\prod\limits_{j=1}^{{{2}^{l-1}}}{\varphi ({{g}_{lj}})}=\prod\limits_{j={2}^{l-1}+1}^{{{2}^{l}}}{\varphi (g_{lj})=e}.$


According to Property \ref{FrPr}, any commutator from $G{{'}_{k}}$ can be presented as a product of some squares ${{s}^{2}},\,\,s\in {{G}_{k}}$, $s=( ({{s}_{l1}},...,{{s}_{l{{2}^{l}}}})\sigma \,)$.

A product of elements of $G_{k}(k-1)$ satisfies the equation $\prod\limits_{j=1}^{{{2}^{l}}}{\varphi ({{g}_{lj}})}=e$, because any permutation of elements from $X^k$, which belongs to $G_k$ is even.
Consider the element $s= ({{s}_{k-1,1}},...,{{s}_{k-1,{{2}^{k-1}}}})\sigma  $, where $ ({{s}_{k-1,1}},...,{{s}_{k-1,{{2}^{k-1}}}}) \in G_k(k-1)$, $\sigma \in G_{k-1}$.
If $g_{01}=(1,2)$, where $g_{01}$ is root permutation of $\sigma$, then $s^2= ({{s}_{k-1,1}{s}_{k-1\sigma (1)}},..., s_{{k-1}, ({{2}^{k-1}})}{s_{k-1, \sigma ({{2}^{k-1}})}})$, where $\sigma(j)>2^{k-1}$ for $j \leq 2^{k-1}$, and if $j < 2^{k-1}$ then $\sigma(j) \geq 2^{k-1}$. Because of $\prod\limits_{j=1}^{2^{k-1}} \varphi (g_{k-1,j})=e$ in $G_k$ and the property $\sigma(j) \leq 2^{k-1}$ for $j > 2^{k-1}$,
then the product $\prod\limits_{j=1}^{{{2}^{k-2}}}{\varphi ({{g}_{k-1,j}}{g}_{k-1,\sigma (j)}})$ of images of v.p. from $({g_{k-1,1} g_{k-1,\sigma (1)}},...,{{g_{k-1,  ({{2}^{k-1}})}} g_{k-1, \sigma ({{2}^{k-1}})}})$ is equal to $\prod\limits_{j=1}^{{{2}^{k-1}}}{\varphi ({{g}_{k-1,j}}})=e$. Indeed in $\prod\limits_{j=1}^{2^{k-1}} \varphi (g_{k-1,j})$ and as in $\prod\limits_{j=1}^{{{2}^{k-1}}}{\varphi ({{g}_{k-1,j}}{g}_{k-1,\sigma (j)}})$ are the same v.p. from $X^{k-1}$ regardless of such $\sigma$ as described above.

  The same is true for right half of $X^{k-1}$. Therefore the equality (\ref{ident}) holds.


  Note that such product $\prod\limits_{j=1}^{2^{k-1}} \varphi (g_{k-1,j})$ is homomorphic image of $({{g}_{l,1}{g}_{l,\sigma (1)}},...,{{{g}_{l,  ({{2}^{l}})}}{g}_{l \sigma ({{2}^{l}})}})$, where $l=k-1$, as an element of $G'_k(l)$ after mapping (\ref{hom}).

 If $g_{01} =e$, where $g_{01}$ is root permutation of $\sigma$ then $\sigma $ can be decomposed as $\sigma =\left( {{\sigma}_{11}},\,{{\sigma }_{12}} \right)$, where ${{\sigma }_{11}},\,{{\sigma }_{12}}$ are root permutations in ${{v}_{11}}$ and ${{v}_{12}}$. As a result $s^2$ has a form  $\left( ({{s}_{l1}{s}_{l\sigma (1)}},...,{{s}_{l \sigma ({{2}^{l-1}})}}) \sigma_1^2,  ({{s}_{l 2^{l-1}+ 1}{s}_{l \sigma (2^{l-1}+ 1)}},...,{{s}_{l  ({{2}^{l}})}}{{s}_{l \sigma ({{2}^{l}})}})  \sigma_2^2 \right)$, where $l=k-1$. As a result of action of ${{\sigma }_{11}}$ all states of $l$-th level with number $1 \leq j \leq 2^{k-2}$ permutes in coordinate from $1$ to $2^{k-2}$ the other are fixed. The action of ${{\sigma }_{11}}$ is analogous.

  It corresponds to the next form of element from $G'_k(l)$: $ ({{g}_{l1}{g}_{l\sigma_1 (1)}},...,{{g}_{l \sigma_1 ({{2}^{l-1}})}}), \\ ({{g}_{l 2^{l-1}+ 1}{g}_{l \sigma_2 (2^{l-1}+ 1)}},...,{{g}_{l  ({{2}^{l}})}}{{g}_{l \sigma_2 ({{2}^{l}})}})  $.
Therefore the product of form
$\prod\limits_{j=1}^{{{2}^{k-2}}}{\varphi ({{g}_{k-1,j}}{{g}_{l\sigma (j)}})}=\prod\limits_{j=2^{k-2}+1}^{{{2}^{k-1}}}{\varphi (g_{k-1,j}^{2})=e},$ because of $g_{k-1,j}^{2}=e$.
Thus, characteristic equation (\ref{ident}) of $k-1$ level holds.


 The conditions (\ref{parity}), (\ref{ident}) for  every ${{s}^{2}}, s\in G_k$ hold so it holds for their product that is  equivalent to conditions  hold for every commutator.
\end{proof}

\begin{definition}
We define a subdirect product of group ${{G}_{k-1}}$ with itself by equipping it with condition (\ref{parity}) and  (\ref{ident}) of index parity on all of $k-1$ levels.
\end{definition}

\begin{corollary}\label{Subdir}
  The subdirect product ${{G}_{k-1}}\boxtimes {{G}_{k-1}}$ is defined by  $k-2$ outer relations on level subgroups. The order of ${{G}_{k-1}}\boxtimes {{G}_{k-1}}$ is ${{2}^{{{2}^{k}}-k-2}}$.
\end{corollary}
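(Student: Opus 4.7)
The plan is to compute $|G_{k-1} \boxtimes G_{k-1}|$ by starting from $|G_{k-1} \times G_{k-1}|$ and dividing by the number of independent defining relations. First, Lemma~\ref{order_of_G_k} together with $|B_k| = 2^{2^k - 1}$ gives $|G_k| = 2^{2^k - 2}$, hence $|G_{k-1}| = 2^{2^{k-1} - 2}$ and so $|G_{k-1} \times G_{k-1}| = 2^{2^k - 4}$.

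Next I would interpret each condition in Statement~\ref{comm_str1} as a relation on a pair $(g_1, g_2) \in G_{k-1} \times G_{k-1}$, viewed as an automorphism of $X^{[k]}$ with trivial root permutation. The vertex permutations at level $l$ of $X^{[k]}$ (for $l \geq 1$) split as $2^{l-1}$ from $g_1$'s subtree plus $2^{l-1}$ from $g_2$'s subtree. I would then classify the defining conditions: the $l = 0$ case of~(\ref{parity}) is vacuous because the subdirect product carries no root permutation, and the pair of half-parities~(\ref{ident}) at level $l = k-1$ is automatic, since each factor $g_i \in G_{k-1} \simeq Syl_2 A_{2^{k-1}}$ already acts as an even permutation on its top level. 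This leaves exactly the $k-2$ parity relations~(\ref{parity}) for $l = 1, 2, \ldots, k-2$, which are the genuine ``outer'' relations linking the two halves.

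After this I would verify independence over $\mathbb{F}_2$ of these $k-2$ parities: each is a linear functional supported on a disjoint set of coordinates (the vertex permutations at its own level), and toggling a single vertex permutation at any chosen interior level of one copy of $G_{k-1}$ changes only that level's parity while keeping the pair inside $G_{k-1} \times G_{k-1}$. Consequently each relation cuts the order in half, and
\[
|G_{k-1} \boxtimes G_{k-1}| = 2^{(2^k - 4) - (k - 2)} = 2^{2^k - k - 2}.
\]

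The main obstacle I anticipate is the bookkeeping of levels: carefully matching the global levels of $X^{[k]}$ with the internal levels of each $G_{k-1}$-subtree, and in particular justifying that the top-level condition~(\ref{ident}) is automatic rather than a genuinely new relation. This reduces to recalling that $G_{k-1}$ is precisely the subgroup of $B_{k-1}$ whose elements act by even permutations on the leaves, which is the content of $G_{k-1} \simeq Syl_2 A_{2^{k-1}}$.
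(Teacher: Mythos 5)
Your proposal is correct and follows essentially the same route as the paper: compute $\left|G_{k-1}\times G_{k-1}\right|=2^{2^{k}-4}$, observe that the top-level condition (\ref{ident}) is automatic because each factor $G_{k-1}\simeq Syl_2 A_{2^{k-1}}$ already acts evenly on its leaves, and divide by $2^{k-2}$ for the remaining level-parity relations. Your explicit $\mathbb{F}_2$-independence check of the $k-2$ relations is a welcome addition that the paper only asserts implicitly ("any of such conditions reduces the order in 2 times").
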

\begin{proof}
 We specify a subdirect product for the group ${{G}_{k-1}}\boxtimes {{G}_{k-1}}$  by using $(k-2)$ conditions for the subgroup levels. Each ${{G}_{k-1}}$ has even index on $k-2$-th level, 
it implies that its relation  for $l=k-1$ holds automatically. This occurs because of the conditions of parity for the index of the last level is characteristic of each of the multipliers ${{G}_{k-1}}$. Therefore It is not an essential condition for determining a subdirect product.

  Thus, to specify  a subdirect product in the group ${{G}_{k-1}}\boxtimes {{G}_{k-1}}$, there are obvious only $k-2$ outer conditions on subgroups of levels.   Any of such conditions reduces the order of ${{G}_{k-1}}\times {{G}_{k-1}}$  in 2 times. Hence, taking into account that the order of ${{G}_{k-1}}$  is ${{2}^{{{2}^{k-1}}-2}}$, the order of ${{G}_{k-1}}\boxtimes {{G}_{k-1}}$ as a subgroup of ${{G}_{k-1}}\times {{G}_{k-1}}$ the following: $\left| {{G}_{k-1}}\boxtimes {{G}_{k-1}} \right|={{\left( {{2}^{{{2}^{k-1}}-2}} \right)}^{2}}:{{2}^{k-2}}={{2}^{{{2}^{k}}-4}}:{{2}^{k-2}}={{2}^{{{2}^{k}}-k-2}}$.
	Thus, we use $k-2$ additional conditions on level subgroup to define the subdirect product ${{G}_{k-1}}\boxtimes {{G}_{k-1}}$, which contain $G{{'}_{k}}$ as a proper subgroup of ${{G}_{k}}$. Because according to the conditions, which are realized in the commutator of $G{{'}_{k}}$, (\ref{ident}) and (\ref{parity}) indexes of levels are even.
\end{proof}

\begin{corollary} \label{emb} A commutator $G{{'}_{k}}$ is embedded as a normal subgroup in ${{G}_{k-1}}\boxtimes {{G}_{k-1}}$.
\end{corollary}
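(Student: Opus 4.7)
The plan is to exhibit the natural coordinate-wise inclusion and then verify two things: that its image lies in the subdirect product, and that this image is stable under conjugation by arbitrary elements of $G_{k-1}\boxtimes G_{k-1}$. By Theorem~\ref{_comm_G_k_eq_[G_k,G_k]} every element of $G'_k$ can be written uniquely as a pair $(g_1,g_2)$ with $g_1,g_2\in G_{k-1}$ and $g_1g_2\in B'_{k-1}$ (in particular the root permutation is automatically trivial on commutators), so the map
\[
\iota: G'_k \longrightarrow G_{k-1}\times G_{k-1},\qquad (g_1,g_2)\mapsto (g_1,g_2),
\]
is an injective homomorphism, and this is the candidate embedding.

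Next I would check that $\iota(G'_k)\subseteq G_{k-1}\boxtimes G_{k-1}$. This is exactly the content of Statement~\ref{comm_str1}: any element of $G'_k$ satisfies the level-parity relations (\ref{parity}) and (\ref{ident}), which are precisely the $k-2$ outer relations that, according to Corollary~\ref{Subdir}, cut out the subdirect product inside $G_{k-1}\times G_{k-1}$. So this step is just an invocation of results already in hand.

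For normality, I would fix $(h_1,h_2)\in G_{k-1}\boxtimes G_{k-1}$ and $(g_1,g_2)\in \iota(G'_k)$ and test the conjugate $(h_1g_1h_1\m,h_2g_2h_2\m)$. The coordinates stay in $G_{k-1}$ because $G_{k-1}\triangleleft B_{k-1}$ by Corollary~\ref{G_k_is_normal_in_B_k}, so the only nontrivial check is the product condition $h_1g_1h_1\m\cdot h_2g_2h_2\m\in B'_{k-1}$. Passing to the abelian quotient $B_{k-1}/B'_{k-1}$ kills the conjugations, and the product reduces modulo $B'_{k-1}$ to $g_1g_2$, which is already trivial there by the characterization of $G'_k$. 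Hence the conjugate lies in $\iota(G'_k)$, and normality follows.

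The only real obstacle is bookkeeping: one must be sure that the combinatorial description of $G_{k-1}\boxtimes G_{k-1}$ through the level relations truly matches the algebraic description of $G'_k$ via $g_1g_2\in B'_{k-1}$. Statement~\ref{comm_str1} is the bridge between these viewpoints, and once it is accepted, the embedding and the normality check become routine. No further use of the wreath recursion machinery or of Lemma~\ref{form of comm} beyond what is already packaged inside Theorem~\ref{_comm_G_k_eq_[G_k,G_k]} should be needed.
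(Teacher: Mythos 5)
Your proposal is correct, and the embedding half follows essentially the same route as the paper: the inclusion $\iota(G'_k)\subseteq G_{k-1}\boxtimes G_{k-1}$ is obtained by checking that elements of $G'_k$ satisfy the level-parity relations (\ref{parity}) and (\ref{ident}) that cut out the subdirect product, which is exactly what the paper extracts from the last part of the proof of Corollary \ref{Subdir} via Statement \ref{comm_str1}. Where you genuinely diverge is the normality claim. The paper disposes of it in one sentence by invoking ``the minimality of $G'_k$ as a normal subgroup of $G_k$'' together with the injective embedding --- an argument that is at best terse, since normality of $G'_k$ in $G_k$ does not by itself transfer to normality in the ambient group $G_{k-1}\boxtimes G_{k-1}$. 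You instead verify normality directly: conjugation by $(h_1,h_2)$ acts coordinate-wise, the coordinates stay in $G_{k-1}$ for trivial reasons, and the product condition $h_1g_1h_1\m\,h_2g_2h_2\m\in B'_{k-1}$ is checked by passing to the abelian quotient $B_{k-1}/B'_{k-1}$, where conjugation disappears and the product collapses to the class of $g_1g_2$, which is trivial by Theorem~\ref{_comm_G_k_eq_[G_k,G_k]}. This computation is self-contained, relies only on the already-established ``iff'' characterization of $G'_k$, and in fact proves the stronger statement that $\iota(G'_k)$ is normal in the full direct product $G_{k-1}\times G_{k-1}$, from which normality in the subdirect product follows a fortiori. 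Your version is the more rigorous of the two; the only caveat is that, as you yourself note, the whole argument leans on Statement~\ref{comm_str1} as the bridge between the relation-theoretic definition of $\boxtimes$ and the algebraic condition $g_1g_2\in B'_{k-1}$, so its validity is inherited from that statement.
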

\begin{proof}
A proof  of injective embedding $G{{'}_{k}}$ into ${{G}_{k-1}}\boxtimes {{G}_{k-1}}$ immediately  follows from last item of proof of  Corollary \ref{Subdir}.   The minimality of $G{{'}_{k}}$ as a normal subgroup of ${{G}_{k}}$ and injective embedding $G{{'}_{k}}$ into ${{G}_{k-1}}\boxtimes {{G}_{k-1}}$ immediately  entails that $G{{'}_{k}}\triangleleft {{G}_{k-1}}\boxtimes {{G}_{k-1}}$.
\end{proof}

\begin{theorem} \label{isomCom}
A commutator of ${{G}_{k}}$ has form $G{{'}_{k}} = {{G}_{k-1}}\boxtimes {{G}_{k-1}}$, where the subdirect product is defined by relations (\ref{parity}) and (\ref{ident}). The order of $G{{'}_{k}}$  is ${{2}^{{{2}^{k}}-k-2}}$.
\end{theorem}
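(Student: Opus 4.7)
The plan is to leverage the injective normal embedding $G'_k \hookrightarrow G_{k-1} \boxtimes G_{k-1}$ from Corollary \ref{emb} together with the order formula $|G_{k-1} \boxtimes G_{k-1}| = 2^{2^k - k - 2}$ from Corollary \ref{Subdir}: once I verify that $|G'_k|$ equals $2^{2^k - k - 2}$, the embedding must be surjective, yielding both the isomorphism $G'_k = G_{k-1} \boxtimes G_{k-1}$ and the claimed order at once. The defining relations (\ref{parity}), (\ref{ident}) will then automatically be not just necessary, as established in Statement \ref{comm_str1}, but also sufficient.

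To count $|G'_k|$, I would invoke the precise criterion from Theorem \ref{_comm_G_k_eq_[G_k,G_k]}(1): an element $(g_1, g_2)$ lies in $G'_k$ iff $g_1, g_2 \in G_{k-1}$ and $g_1 g_2 \in B'_{k-1}$. Because $B'_{k-1} < G_{k-1}$ by Proposition \ref{comm_F_k_is_subgroup_of_L_k}, the map $(g_1, g_2) \mapsto (g_1, g_1 g_2)$ is a bijection from $G'_k$ onto $G_{k-1} \times B'_{k-1}$: its inverse $(g_1, b) \mapsto (g_1, g_1\m b)$ is well-defined because $g_1\m b \in G_{k-1}$ is forced by $g_1\m \in G_{k-1}$ and $b \in B'_{k-1} \subset G_{k-1}$. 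Hence $|G'_k| = |G_{k-1}| \cdot |B'_{k-1}|$, and Lemma \ref{order_of_G_k} already gives $|G_{k-1}| = 2^{2^{k-1} - 2}$.

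The main obstacle, and essentially the only input not already packaged in earlier results, is the computation $|B'_{k-1}| = 2^{2^{k-1} - k}$. I would obtain this from $B_k^{ab} \cong C_2^k$, proved by a short induction on $k$ using the wreath recursion $B_k = B_{k-1} \wr C_2$ and the standard identity $(A \wr C_2)^{ab} \cong A^{ab} \times C_2$; equivalently, the $k$ level-parity homomorphisms $B_k \to C_2$ (counting the parity of non-trivial v.p.\ on each of levels $0, 1, \ldots, k-1$) are visibly independent and together span the abelianization. Combining these ingredients gives
\[
|G'_k| = 2^{2^{k-1} - 2} \cdot 2^{2^{k-1} - k} = 2^{2^k - k - 2},
\]
which matches $|G_{k-1} \boxtimes G_{k-1}|$ and closes the proof.
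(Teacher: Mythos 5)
Your proposal is correct, but it settles the reverse inclusion by a different mechanism than the paper does. The paper, after establishing $G'_k<G_{k-1}\boxtimes G_{k-1}$ via Statement \ref{comm_str1}, proves surjectivity \emph{constructively}: it exhibits every element of the subdirect product as an explicit product of commutators, built from the conjugates $\alpha_j\alpha_l\alpha_j$, the products $P_{lm}$, and the last-level generators $\tau_{ij}=\alpha_i^{-1}\tau_{1,2^{k-2}}^{-1}\alpha_i\tau_{j,2^{k-2}}$, and it obtains the order $2^{2^k-k-2}$ by counting the $k$ independent parity conditions that cut $G'_k$ out of $G_k$. You instead short-circuit the generation argument with a pure order count: the bijection $(g_1,g_2)\mapsto(g_1,g_1g_2)$ from $G'_k$ onto $G_{k-1}\times B'_{k-1}$ (legitimate, since Theorem \ref{_comm_G_k_eq_[G_k,G_k]}(1) is an iff and $B'_{k-1}<G_{k-1}$ by Proposition \ref{comm_F_k_is_subgroup_of_L_k}) gives $|G'_k|=|G_{k-1}|\cdot|B'_{k-1}|$, and the input $|B'_{k-1}|=2^{2^{k-1}-k}$ follows either from your abelianization argument $(A\wr C_2)^{ab}\cong A^{ab}\times C_2$ or, even more directly, from Lemma \ref{form of comm} via the recursion $|B'_j|=|B_{j-1}|\cdot|B'_{j-1}|$. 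Matching this against $|G_{k-1}\boxtimes G_{k-1}|=2^{2^k-k-2}$ from Corollary \ref{Subdir} forces the embedding of Corollary \ref{emb} to be onto. Your route is shorter, avoids the delicate bookkeeping with portraits, and makes the sufficiency of relations (\ref{parity}) and (\ref{ident}) automatic; what it does not deliver, and what the paper's constructive proof is later mined for, is the explicit commutator expressions for the generators that feed into the minimal-generating-set computation of Section 4. Note also that your argument inherits whatever reliance Corollary \ref{Subdir} has on the independence of the $k-2$ defining conditions, but since that is an already-established prior result, this is a fair dependency.
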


\begin{proof}
 Since according to Statement \ref{comm_str1} $({{g}_{1}},{{g}_{2}})$ as elements of $G{{'}_{k}}$ also satisfy relations (\ref{parity}) and (\ref{ident}), which define the subdirect product ${{G}_{k-1}}\boxtimes {{G}_{k-1}}$.
Also condition ${{g}_{1}}{{g}_{2}}\in B{{'}_{k-1}}$ gives parity of permutation which defined by $({{g}_{1}},{{g}_{2}})$ because $B{{'}_{k-1}}$ contains only element with even index of level \cite{SkIrred}.
The group $G{{'}_{k}}$ has 2 disjoint domains of transitivity so $G{{'}_{k}}$ has the structure of a subdirect product of ${{G}_{k-1}}$ which acts on this domains transitively.
Thus, all elements of $G{{'}_{k}}$ satisfy the conditions (\ref{parity}), (\ref{ident}) which define subdirect product ${{G}_{k-1}}\boxtimes {{G}_{k-1}}$. Hence $G{{\text{ }\!\!'\!\!\text{ }}_{k}}<{{G}_{k-1}}\boxtimes {{G}_{k-1}}$ but $G{{\text{ }\!\!'\!\!\text{ }}_{k}}$ can be equipped by some other relations,  therefore, the presence of isomorphism has not yet been proved. For proving revers inclusion we have to show that every element from ${{G}_{k-1}}\boxtimes {{G}_{k-1}}$ can be expressed as word ${{a}^{-1}}{{b}^{-1}}ab$, where $a,b\in {{G}_{k}}$.
Therefore, it suffices to show the reverse inclusion. For this goal we use that $G{{\text{ }\!\!'\!\!\text{ }}_{k}}<{{G}_{k-1}}\boxtimes {{G}_{k-1}}$.  As it was shown in \cite{SkIrred} that  the order of ${{G}_{k}}$ is ${{2}^{{{2}^{k}}-2}}$.

As it was shown above, $G{{'}_{k}}$ has $k$ new conditions relatively to ${{G}_{k}}$. Each condition is stated on some level-subgroup. Each of these conditions reduces an order of the corresponding level subgroup in 2 times, so the order of $G{{'}_{k}}$ is in ${{2}^{k}}$ times lesser. On every ${{X}^{l}}$,  $l\le k-1$, there is even number of active v.p. by this reason, there is trivial permutation on ${{X}^{0}}$.

According to the Corollary \ref{Subdir}, in the subdirect product ${{G}_{k-1}}\boxtimes {{G}_{k-1}}$ there are exactly $k-2$ conditions relatively to ${{G}_{k-1}}\times {{G}_{k-1}}$, which are for the subgroups of levels. It has been shown that the relations (\ref{parity}), (\ref{ident}) are fulfilled in $G{{'}_{k}}$.

Let ${{\alpha }_{lm}}$, $0\le l\le k-1$, $0\le m \le 2^{l-1}$ be an automorphism from ${{G}_{k}}$ having only one active v.p. in ${{v}_{lm}}$, and let ${{\alpha }_{lm}}$ have trivial permutations in rest of the vertices. Recall that partial case of notation of form ${{\alpha }_{lm}}$ is the generator ${\alpha }_{l}:={{\alpha }_{l1}}$ of ${{G}_{k}}$ which was defined by us in \cite{SkIrred} and denoted by us as ${{\alpha }_{l}}$. Note that the order of ${{\alpha }_{li}},\,\,\,0\le l\le k-1$ is 2. Thus, ${{\alpha }_{ji}}=\,\alpha _{ji}^{-1}$. We choose a generating set consisting of the following $2k-3$ elements: $({{\alpha }_{1,1;2}}),{{\alpha }_{2,1}},...,{{\alpha }_{k-1,1}},{{\alpha }_{2,3}},...,{{\alpha }_{k-{{1,2}^{k-2}}+1}}$, where $({{\alpha }_{1,1;2}})$ is an automorphism having exactly 2 active v.p. in ${{v}_{11}}$ and ${{v}_{12}}$. Product of the form $({{\alpha}_{j1}}{{\alpha}_{l1}}{{\alpha}_{j1}}){{\alpha}_{l1}}$ are denoted by ${{P}_{lm}}$. In more details, ${{P}_{lm}}={{\alpha }_{ji}}{{\alpha }_{lm}}{{\alpha }_{ji}}{{\alpha }_{lm}}$,  where ${{\alpha }_{ji}}\in {{G}_{k}}(j)$. Using a conjugation by generator ${{\alpha}_{j}}$, $0\le j<l$ we can express any v.p. on $l$-level, because $({{\alpha }_{j}}{{\alpha }_{l}}{{\alpha }_{j}})=\,{{\alpha}_{l{{2}^{l-j-1}}+1}}$. Consider the product ${{P}_{lj}}=({{\alpha }_{j}}{{\alpha }_{l}}{{\alpha }_{j}}){{\alpha }_{l}}$.

	\begin{enumerate}
   \item We need to show that every element of ${{G}_{k-1}}\boxtimes {{G}_{k-1}}$
     can be constructed as ${{g}^{-1}}{{h}^{-1}}gh$,  $g,\,h\in {{G}_{k}}$.  This proves the absence of other relations in $G{{'}_{k}}$ except those that in the subdirect product ${{G}_{k-1}}\boxtimes {{G}_{k-1}}$. Thereby we prove the embeddedness of $G{{'}_{k}}$ in ${{G}_{k-1}}\boxtimes {{G}_{k-1}}$. We have to construct an element of form ${{P}_{k-1}}{{P}_{k-2}}\cdot ... \cdot {{P}_{1}}{{P}_{0}}$ as a product of elements of form $[g,h]$, where ${{P}_{l}}=\prod\limits_{i=1}^{{{2}^{l}}}{{{P}_{lm}}}$ satisfying  relations (\ref{parity}), (\ref{ident}).

    \item
	We have to construct an arbitrary tuple of 2 active v.p. on ${{X}^{l}}$ as a product of several ${{P}_{l}}$. We use the generator ${{\alpha}_{l}}$ and conjugating  it by ${{\alpha}_{j}}$,  $j<l$.  It corresponds to the tuple of v.p. of the form $({{g}_{l1}},e,...,e,{{g}_{lj}},e,...,e)$,  where ${{g}_{l1}},\,\,{{g}_{lj}}$ are non-trivial. Note that this tuple $({{g}_{l1}},e,...,e,{{g}_{lj}},e,...,e)$ is an element of direct product if we consider as an element of $S_2$ in vertices of $X^l$.
To obtain a tuple of v.p. of form   $(e,...,e,{{g}_{lm}},e,...,e,{{g}_{lj}},e,...,e)$ we multiply ${{P}_{lj}}$ and ${{P}_{lm}}$.

   \item To obtain a tuple of v.p. with $2m$ active v.p. we construct $~\prod\limits_{i=1}^{m}{{{P}_{l{{j}_{i}}}}},\,m<{{2}^{l}}$ for varying $i,j<{{2}^{k-2}}$.
  \end{enumerate}

On the $(k-1)$-th level we choose the generator $\tau $ which was defined in \cite{SkIrred} as $\tau =\tau _{k-1,\,1} {\tau _{k-1,\,\,{{2}^{k-1}}}}$. Recall that it was shown in \cite{SkIrred} how to express  any ${{\tau }_{ij}}$ using $\tau $, ${{\tau }_{{i,{2}^{k-2}}}}$, ${{\tau }_{{j, {2}^{k-2}}}}$, where $\,i,j<{{2}^{k-2}}$, as a product of commutators ${{\tau }_{ij}}={{\tau }_{i, {2}^{k-2}}}{{\tau }_{j, {2}^{k-2}}}=(\alpha _{i}^{-1}\tau _{{{1,2}^{k-2}}}^{-1}\alpha _{i}^{{}}\tau _{{{j,2}^{k-2}}}^{{}})$.
  Here ${{\tau }_{i{{,2}^{k-2}}}}$ was expressed as the commutator ${{\tau }_{i{{,2}^{k-2}}}}=\alpha _{i}^{-1}\tau _{{{1,2}^{k-2}}}^{-1}\alpha _{i}^{{}}\tau _{{{1,2}^{k-2}}}^{{}}$. Thus, we express all tuples of elements satisfying to relations
 (\ref{parity}) and (\ref{ident})
 by using only commutators of ${{G}_{k}}$. Thus, we get all tuples of each level subgroup elements satisfying the relations (\ref{parity}) and (\ref{ident}). It means we express every element of each level subgroup by a commutators.
In particular to obtain a tuple of v.p. with $2m$ active v.p. on ${{X}^{k-2}}$ of ${{v}_{11}}{{X}^{[k-1]}}$, we will construct the product for ${{\tau }_{ij}}$ for varying $i,j<{{2}^{k-2}}$.

Thus, all vertex labelings of automorphisms, which appear in the representation of ${{G}_{k-1}}\boxtimes {{G}_{k-1}}$ by portraits as the subgroup of $Aut{{X}^{[k]}}$, are also in the representation of $G{{'}_{k}}$.

Since there are
faithful representations of ${{G}_{k-1}}\boxtimes {{G}_{k-1}}$ and ${G'}_{k}$ by
portraits of automorphisms from $Aut{{X}^{[k]}}$, which coincide with each other, then subgroup ${G'}_{k}$ of ${{G}_{k-1}}\boxtimes {{G}_{k-1}}\simeq G{{'}_{k}}$ is equal to whole ${{G}_{k-1}}\boxtimes {{G}_{k-1}}$  ( i.e. ${{G}_{k-1}}\boxtimes {{G}_{k-1}} = G{{'}_{k}}$).
\end{proof}






The archived results are confirmed by algebraic system GAP calculations. For instance, $\left| Sy{{l}_{2}}{{A}_{8}}\right| ={{2}^{6}}={{2}^{{{2}^{3}}-2}}$ and $\left| (Syl{{A}_{{{2}^{3}}}})' \right| = {{2}^{{{2}^{3}}-3-2}} = 8$.
The order of $G_2$ is 4, the number of additional relations in subdirect product is $k-2=3-2=1$.
Then we have the same result $(4 \cdot 4):2^1=8$, which confirms Theorem \ref{isomCom}.

\begin{example}
  Set $k=4$ then $|(Syl A_{16})'| = |(G_4)'| = 1024$, $|G_3| = 64$, since $ k-2=2$, so according to our theorem above order of $Syl_2 A_{16} \boxtimes Syl_2 A_{16}$ is defined by $2^{k-2}=2^2$ relations, and by this reason is equal to $(64\cdot 64) :4=1024$. Thus, orders are coincides.
\end{example}
\begin{example}
The true order of $(Syl_2 {A_{32}})'$ is $33554432 = 2^{25}$, $k=5$. A number of additional relations which define the subdirect product is $k-2=3$.
Thus, according to Theorem \ref{isomCom},
$ \mid (Syl_2 A_{16} \boxtimes Syl_2 A_{16})'\mid =2^{14} 2^{14}: 2^{5-2} = 2^{28} : 2^{5-2}= 2^{25}$.
\end{example}


According to calculations in GAP we have:  $Sy{{l}_{2}}{{A}_{7}}\simeq Sy{{l}_{2}}{{A}_{6}}\simeq {{D}_{4}}$.  Therefore its derived subgroup  $\left( Sy{{l}_{2}}{{A}_{7}} \right)'\simeq \left( Sy{{l}_{2}}{{A}_{6}} \right)'\simeq \left( {{D}_{4}} \right)'={{C}_{2}}$.

The following structural law for Syllows 2-subgroups is typical. The structure of $Sy{{l}_{2}}{{A}_{n}},\,\,Sy{{l}_{2}}{{A}_{k}}$ is the same. If for all $n$ and  $k$ that have the same multiple of 2 as multiplier in decomposition on $n!$ and $k!$  Thus, $Sy{{l}_{2}}{{A}_{2k}}\simeq Sy{{l}_{2}}{{A}_{2k+1}}$.

\begin{example}  $Sy{{l}_{2}}{{A}_{7}}\simeq Sy{{l}_{2}}{{A}_{6}}\simeq {{D}_{4}}$, $Sy{{l}_{2}}{{A}_{10}}\simeq Sy{{l}_{2}}{{A}_{11}}\simeq Sy{{l}_{2}}{{S}_{8}}\simeq \left( {{D}_{4}}\times {{D}_{4}} \right)\rtimes {{C}_{2}}$.
$Syl_2 A_{12} \simeq  Syl_2 S_{8}\boxtimes Syl_2 S_{4}$, by the same reasons that from the proof of Corollary \ref{Subdir} its commutator subgroup is decomposed as $(Syl_2 A_{12})' \simeq  (Syl_2 S_{8})' \times (Syl_2 S_{4})'$.
\end{example}

\begin{lemma}
\label{comm_str}
In $G''_{k}$ the following equalities are true:

\begin{gather} \label{parity2}
	\prod\limits_{j=1}^{{{2}^{l-2}}}{\varphi ({{g}_{lj}})}=\prod\limits_{j={{2}^{l-2}}+1}^{{{2}^{l-1}}}\varphi ({{g}_{lj}})\,\,=\,\prod\limits_{j={{2}^{l-1}}+1}^{{{2}^{l-1}}+{{2}^{l-2}}}{\varphi ({{g}_{lj}})}=\prod\limits_{j={{2}^{l-1}}+{{2}^{l-2}}+1}^{{{2}^{l}}}{\varphi ({{g}_{lj}}}),\,\,\,\,\,2<l<k
\end{gather}

In case $l=k-1$, the following conditions hold:

\begin{gather}\label{prod_K-1}
\prod\limits_{j=1}^{{{2}^{l-2}}}{\varphi ({{g}_{lj}})}=\prod\limits_{j={{2}^{i-1}}+1}^{{{2}^{l-1}}}{\varphi ({{g}_{lj}})=e,\,\,\,\,\,\prod\limits_{j={{2}^{l-1}}}^{{{2}^{l-1}}+{{2}^{l-2}}}{\varphi ({{g}_{lj}})}=\prod\limits_{j={{2}^{l-1}}+{{2}^{l-2}}}^{{{2}^{l}}}{\varphi ({{g}_{lj}})}}=e
\end{gather}

In other terms, the subgroup  $G''_{k}$ has an even index of any level of ${{v}_{11}}{{X}^{\left[ k-2 \right]}}$ and of ${{v}_{12}}{{X}^{\left[ k-2 \right]}}$.
\end{lemma}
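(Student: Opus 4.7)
The plan is to mirror the proof of Statement \ref{comm_str1}, descending one further step of the derived series. The key ingredient is the general identity already exploited in Statement \ref{FrPr}, namely that $[a,b] = a^2(a\m b a)^2(a\m b\m)^2$ in any group, which yields $G''_k = (G'_k)' \leq (G'_k)^2$. Since the relations (\ref{parity2}) and (\ref{prod_K-1}) are expressed through the $C_2$-valued homomorphism $\varphi$, they are preserved under multiplication; hence it suffices to verify them for an arbitrary square $s^2$ with $s \in G'_k$.

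Fix such $s \in G'_k$. By Statement \ref{comm_str1} taken at $l=0$, the root permutation of $s$ is trivial, so $s = (s_1, s_2)$ with $s_1, s_2 \in G_{k-1}$ and $s^2 = (s_1^2, s_2^2)$. By Proposition \ref{g_sq_in_G_k} applied inside $G_{k-1}$, each $s_i^2$ lies in $G'_{k-1}$. Now I would invoke Statement \ref{comm_str1} for each $s_i^2 \in G'_{k-1}$, viewed as an automorphism of the subtree rooted at $v_{1i}$ of depth $k-1$: this yields, for subtree-levels $l'$ with $0 \leq l' < k-2$, triviality of the $\varphi$-product over the whole subtree-level $l'$, and for $l' = k-2$, triviality of the $\varphi$-product over each of the two halves of the subtree-level.

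The translation to whole-tree coordinates is the indexing core of the argument. A vertex on subtree-level $l'$ of $v_{1i} X^{[k-1]}$ is exactly a descendant of $v_{1i}$ on whole-tree level $l = l'+1$; there are $2^{l'} = 2^{l-1}$ of them, and they form the first two quarters of whole-tree level $l$ when $i=1$ and the last two when $i=2$. Consequently, for $1 \leq l \leq k-2$ we obtain $\prod_{j=1}^{2^{l-1}} \varphi(g_{lj}) = e$ and $\prod_{j=2^{l-1}+1}^{2^{l}} \varphi(g_{lj}) = e$, which, when each half-product is broken as the product of its two quarter-products, is exactly the equality (\ref{parity2}) between consecutive quarters within each half; while for $l = k-1$ each of the four quarters separately has trivial $\varphi$-product, which is (\ref{prod_K-1}). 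Closure of these relations under multiplication follows from $\varphi$ being a homomorphism into the abelian group $C_2$, so the relations extend from the generating squares $s^2$ to all of $(G'_k)^2$ and in particular to $G''_k$.

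The main obstacle I anticipate is the careful bookkeeping of indices needed to translate Statement \ref{comm_str1}, originally stated for $G'_{k-1}$ acting on its own tree, into the correct conditions on halves and (at the deepest active level) quarters of the whole-tree levels of $s^2 \in G'_k$, paying attention to which quarter sits under $v_{11}$ versus $v_{12}$. Once this dictionary is fixed, the abelian-quotient argument closing $(G'_k)^2$ under products and the inductive application of Statement \ref{comm_str1} complete the proof mechanically.
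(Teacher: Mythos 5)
Your reduction is sound as far as it goes, and it is essentially the paper's route: the paper also passes to squares (via $G'_k\simeq G_{k-1}\boxtimes G_{k-1}$ together with $G_{k-1}^2=G'_{k-1}$ from Corollary \ref{Fr}), while you reach $s^2=(s_1^2,s_2^2)$ with $s_i^2\in G'_{k-1}$ more directly through Proposition \ref{g_sq_in_G_k}. The case $l=k-1$ and the closing ``in other terms'' sentence are fully covered by your argument. The gap is in the chain (\ref{parity2}). Writing $Q_1,Q_2,Q_3,Q_4$ for the four quarter-products of level $l$, applying Statement \ref{comm_str1} to each component $s_i^2\in G'_{k-1}$ separately gives only $Q_1Q_2=e$ and $Q_3Q_4=e$, i.e. $Q_1=Q_2$ and $Q_3=Q_4$ in $C_2$. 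But (\ref{parity2}) is a chain of three equalities and also asserts the middle one, $Q_2=Q_3$, which links the two halves of the tree and cannot be obtained by treating the components independently: for $s=(s_1,s_2)\in G'_k$ with $s_i=(a_i,b_i)\sigma^{\epsilon_i}$ one finds $Q_2(s^2)=\bigl(\prod_{j}\varphi((s_1)_{l-1,j})\bigr)^{\epsilon_1}$ and $Q_3(s^2)=\bigl(\prod_{j}\varphi((s_2)_{l-1,j})\bigr)^{\epsilon_2}$, and these are a priori unrelated if $s_1,s_2$ range independently over $G_{k-1}$.

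The missing ingredient is the constraint $s_1s_2\in B'_{k-1}$ supplied by Theorem \ref{_comm_G_k_eq_[G_k,G_k]}: since every element of $B'_{k-1}$ has even index on every level, this forces $\epsilon_1=\epsilon_2$ and $\prod_{j}\varphi((s_1)_{l-1,j})=\prod_{j}\varphi((s_2)_{l-1,j})$, whence $Q_2(s^2)=Q_3(s^2)$. You need to add this step (or explicitly prove only the weakened statement without the middle equality, which is in fact all that the final sentence of the lemma and the subsequent order count use). A second, smaller repair: closure of the relations under multiplication does not follow merely from $\varphi$ taking values in the abelian group $C_2$; the quarter-products are multiplicative only on automorphisms that preserve each quarter setwise. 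This does hold here, because every element of $(G'_k)^2$ has trivial vertex permutations on $X^0$ and $X^1$ and therefore fixes the four vertices of $X^2$ whose descendant sets are the quarters, but that observation has to be stated rather than absorbed into the abelianness of $C_2$.
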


\begin{proof}

As a result of derivation of $G'_k$, elements of $G''_k(1)$ are trivial.
Due the fact that $G{{'}_{k}}\simeq {{G}_{k-1}}\boxtimes {{G}_{k-1}}$, we can derivate $G{{'}_{k}}$ by commponents. The commutator of ${{G}_{k-1}}$  is already investigated in Theorem \ref{isomCom}. As $G_{k-1}^{2}=G{{'}_{k-1}}$ by Corollary \ref{Fr}, it is more convenient to present a characteristic equalities in the second commutator $G'{{'}_{k}}\simeq G{{'}_{k-1}}\boxtimes G{{'}_{k-1}}$ as equations in $G_{k-1}^{2}\boxtimes G_{k-1}^{2}$. As shown above, for $2\leq l<k-1$, in $G_{k-1}^{2}$ the following equalities are true:
\begin{gather}
\prod\limits_{j=1}^{{{2}^{l-1}}}{\varphi ({{g}_{lj}}{{g}_{l\sigma (j)}})}= \prod\limits_{j=1}^{{{2}^{l-1}}}{\varphi ({{g}_{lj}})}     \prod\limits_{j=1}^{{{2}^{l-1}}}{\varphi ({{g}_{l\sigma (j)}})} = \prod\limits_{j=1}^{{{2}^{l-1}}}{\varphi ({{g}_{lj}})}     \prod\limits_{j=1}^{{{2}^{l-1}}}{\varphi ({{g}_{li}})} =\prod\limits_{j=1}^{{{2}^{l-1}}}{\varphi (g_{lj}^{2})=e}
 \end{gather}
  \begin{gather}  \label{level_cond}
\prod\limits_{j=1}^{{{2}^{l-2}}}{\varphi ({{g}_{lj}})}=\prod\limits_{j={{2}^{l-2}}+1}^{{{2}^{l-1}}}{\varphi ({{g}_{lj}}})\,\,=\,\prod\limits_{j={{2}^{l-1}}+1}^{{{2}^{l-1}}+{{2}^{l-2}}}{\varphi ({{g}_{lj}})}=\prod\limits_{j={{2}^{l-1}}+{{2}^{l-2}}+1}^{{{2}^{l}}}{\varphi ({{g}_{lj}}}).
\end{gather}
 The equality (\ref{level_cond}) is true because of it is the initial group $G{{'}_{k}}\simeq {{G}_{k-1}}\boxtimes {{G}_{k-1}}$. The equalities
  \[\prod\limits_{j={{2}^{l-1}}+1}^{{{2}^{l-1}}+{{2}^{l-2}}}{\varphi ({{g}_{lj}})}=\prod\limits_{j={{2}^{l-1}}+{{2}^{l-2}}+1}^{{{2}^{l}}}{\varphi ({{g}_{lj}}})\]

   are right for elements of second group $G{{'}_{k-1}}$, since elements of the original group are endowed with this conditions.

Upon a squaring of $G{{'}_{k}}$ any element of $G{{'}_{k}}(l)$, satisfies the equality \eqref{level_cond} in addition to satisfying the previous conditions (\ref{parity2}) because of ${{\left( G{_{k-1}}(l) \right)}^{2}} = G{{'}_{k-1}}(l)$. The similar conditions appears in ${{\left( G{{'}_{k-1}}(k-2) \right)}^{2}}$ after squaring of $G{{'}_{k}}$. Thus, taking into account the characteristic equations of $G{{'}_{k-1}}(l)$, the subgroup ${{\left( G{{'}_{k-1}}(k-2) \right)}^{2}}$ satisfies the equality:

\begin{gather} \label{prod_K-1}
\prod\limits_{j=1}^{{{2}^{k-3}}}{\varphi ({{g}_{lj}})}=\prod\limits_{j={{2}^{k-3}}+1}^{{{2}^{k-2}}}{\varphi ({{g}_{lj}})=e,\,\,\,\,\,\prod\limits_{j={{2}^{k-2}}+1}^{{{2}^{k-2}}+{{2}^{k-3}}}{\varphi({{g}_{lj}})}=\prod\limits_{j={{2}^{k-1}}+{{2}^{k-2}}+1}^{{{2}^{k-1}}}{\varphi ({{g}_{lj}})}}=e.
\end{gather}

	Taking into account the structure $G{{'}_{k}}\simeq {{G}_{k-1}}\boxtimes {{G}_{k-1}}$ we obtain after derivation $G'{{'}_{k}}\simeq ({{G}_{k-2}}\boxtimes {{G}_{k-2}})\boxtimes ({{G}_{k-2}}\boxtimes {{G}_{k-2}})$.
With respect to conditions \ref{parity}, \ref{ident}  in the subdirect product we have that the order of  $G'{{'}_{k}}$ is ${{2}^{{{2}^{k}}-k-2}}:{{2}^{2k-3}}={{2}^{{{2}^{k}}-3k+1}}$ because on every level $2\le l<k$ order of level subgroup $G'{{'}_{k}}(l)$  is in 4 times lesser then order of  $~G{{'}_{k}}(l)$. On the 1-st level one new condition arises that reduce order of  $~G{{'}_{k}}(1)$ in 2 times. Totally we have  $2(k-2)+1=2k-3$ new conditions in comparing with $~G{{'}_{k}}$.
\end{proof}

\begin{example} Size of $(G_4'')$ is 32, a size of direct product $(G_3')^2$ is 64, but, due to relation on second level of $G''_k$, the direct product $(G_3')^2 $ transforms into the subdirect product $G_3'\boxtimes G_3'$ that has 2 times less feasible combination on $X^2$. The number of additional relations in the subdirect product is $k-3=4-3=1$. Thus the order of product is reduced in $2^1$ times.
\end{example}


\begin{example}
The  commutator  subgroup of $Syl'_2(A_8)$  consists of  elements:
$\{ e, (13)(24)(57)(68), \\ (12)(34), (14)(23)(57)(68), (56)(78), (13)(24)(58)(67),(12)(34)(56)(78), (14)(23)(58)(67)\}$.
The commutator $Syl_2 '(A_8) \simeq C_2 ^3$ that is an elementary abelian 2-group of order 8. This fact confirms our formula $d(G_k)=2k-3$, because $k=3$ and $d(G_k)= 2k-3=3$.
A minimal  generating  set  of  $Syl'_2(A_{8})$ consists of 3 generators: $(1, 3)(2, 4)(5, 7)(6, 8), (1, 2)(3, 4), \\ (1, 3)(2, 4)(5, 8)(6, 7).$ 
\end{example}

\begin{example}
The minimal generating  set  of  $Syl'_2(A_{16})$ consists of 5 (that is $2\cdot4-3$) generators:   $(1, 4, 2, 3)(5, 6)(9, 12)(10, 11),
(1, 4)(2, 3)(5, 8)(6, 7), (1, 2)(5, 6), \\ (1, 7, 3, 5)
(2,8,4,6)(9,14,12,16)(10,13,11,15),  (1,7)
(2, 8)(3, 6)(4, 5)(9, 16, 10, 15)\times \\ \times(11, 14, 12, 13)$.
\end{example}

\begin{example}
Minimal generating  set  of  $Syl'_2(A_{32})$ consists of 7 (that is $2\cdot5-3$) generators:  $(23,24)(31,32), (1,7)(2,8)(3,5,4,6)(11,12)(25,32)(26,31)(27,29)(28,30),  \\   (3,4)(5,8)(6,7)(13,14)(23,24)(27,28)(29,32)(30,31),
\, \, (7,8)(15,16)(23,24)(31,32), \\  (1,9,7,15)(2,10,8,16)(3,11,5,13)(4,12,6,14)(17,
    29,22,27,18,30,21,28) \times \\ (19,32,23,26,20,31,24,25),
 (1,5,2,6)(3,7,4,8)(9,15)(10,16)(11,13)(12,14)(19,20)\times \\ (21,24,22,23)(29,31)(30,
    32), (3,4)(5,8)(6,7)(9,11,10,12)(13,14)(15,16)\times \\ (17,23,20,22,18,24,19,
    21) (25,29,27,32,26,30,28,31).
$
\end{example}
This confirms our formula of minimal generating set size  $2\cdot k-3$.

\begin{corollary}
A total number of irreducible generic sets of $(Sy{{l}_{2}}{{A}_{{{2}^{k}}}})'$ is $\left( {{2}^{2k-3}}-1 \right)\left( {{2}^{2k-3}}-{{2}^{1}} \right)\cdot \,\,...\,\,\,\cdot \left( {{2}^{2k-3}}-{{2}^{2k-4}} \right):(2k-3)!$
\end{corollary}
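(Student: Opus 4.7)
The plan is to apply the Burnside basis theorem to the finite $2$-group $G'_k = (Syl_2 A_{2^k})'$. Since $G'_k$ is a finite $2$-group, the Frattini quotient $G'_k / \Phi(G'_k)$ is an elementary abelian $2$-group, hence an $F_2$-vector space, and minimal (i.e.\ irreducible) generating sets of $G'_k$ correspond bijectively, via the quotient map, with bases of this vector space. Thus counting irreducible generating sets of $G'_k$ reduces to counting unordered bases of an $F_2$-vector space of the appropriate dimension.

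First I would fix the dimension. The explicit construction given in the proof of Theorem~\ref{isomCom} and illustrated in the examples for $k=3,4,5$ produces a generating set of $G'_k$ of cardinality $2k-3$, namely one generator on each level coming from $\tau_{k-1,1}\tau_{k-1,2^{k-1}}$ together with the $\alpha$-type and $\tau$-type commutators distributed across the levels. The fact that this set is minimal follows because the structural conditions (\ref{parity}) and (\ref{ident}) imposed on $G'_k = G_{k-1}\boxtimes G_{k-1}$ leave exactly $2k-3$ independent coordinates modulo $\Phi(G'_k)$. Hence
\[
G'_k / \Phi(G'_k) \;\simeq\; C_2^{\,2k-3}.
\]

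Second, I would invoke the Burnside basis theorem: a subset $S\subset G'_k$ is an irreducible generating set if and only if its image in $G'_k/\Phi(G'_k)$ is a basis of $F_2^{\,2k-3}$. Since elements in the same $\Phi(G'_k)$-coset are interchangeable as generators at the level of counting, two preimages of a given basis give the same minimal generating set. Therefore the number of unordered irreducible generating sets of $G'_k$ equals the number of unordered bases of $F_2^{\,2k-3}$. The number of ordered bases of $F_2^{n}$ is
\[
|GL_n(F_2)| \;=\; \prod_{i=0}^{n-1}\bigl(2^n - 2^i\bigr),
\]
and dividing by $n!$ to pass to unordered bases with $n = 2k-3$ yields exactly the formula
\[
\frac{(2^{2k-3}-1)(2^{2k-3}-2)\cdots(2^{2k-3}-2^{2k-4})}{(2k-3)!}.
\]

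The main obstacle is the cleanest justification of step one, the equality $d(G'_k) = 2k-3$, i.e.\ the exactness of the Frattini rank. The construction of an explicit generating set of size $2k-3$ gives the upper bound $d(G'_k)\le 2k-3$; the lower bound requires verifying that no smaller set can generate $G'_k$, which follows from the $2k-3$ independent level-parity conditions (\ref{parity})--(\ref{ident}) together with the fact that each condition reduces the $F_2$-dimension of the Frattini quotient by exactly one. Once this is in place, the counting step is completely routine.
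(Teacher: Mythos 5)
Your proposal is correct and follows essentially the same route as the paper: both identify the Frattini quotient of $(Syl_2 A_{2^k})'$ as an elementary abelian $2$-group of rank $2k-3$, count ordered bases of this vector space as $\prod_{i=0}^{2k-4}(2^{2k-3}-2^i)$, and divide by $(2k-3)!$ to pass to unordered generating sets. The paper's own justification is just a compressed version of your argument (it likewise leans on the rank $2k-3$ established earlier and on the Burnside basis correspondence without restating it), so there is no substantive difference to report.
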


It follows from the fact that Frattini quotient of the commutator subgroup is an elementary abelian 2-group in this case. It can be considered as vector space which base has $2k-3$ generating vectors. Taking into consideration that permutation of generating vectors do not give us a new base we have to reduce the number of generating vectors in $(2k-3)!$ times.

Let elements $g,\,\,h\in {{G}_{k}}$ are conjugated that is ${{x}^{-1}}gx=\,\,h$ where $x\in {{G}_{k}}$.

\begin{remark}
The order of commutator subgroup according to Corollary \ref{Subdir} is ${{2}^{{{2}^{k}}-k-2}}$  that is in ${{2}^{k}}$ times lesser then order of Syllow 2-subgroup that is ${{2}^{{{2}^{k}}-2}}$. Since if we find that subgroup elements $g,\,\,h$ belongs to one commutator subgroup then it reduces the complexity of solving conjugacy search problem in ${{2}^{k}}$ times.
\end{remark}

The minimal generating set for ${{G}_{4}}$ can be presented in form of wreath recursion:    \[{{a}_{1}}=(e,e)\sigma ,\text{ }{{b}_{2}}=\left( {{a}_{1}},e \right),\text{ }{{b}_{3}}=\left( {{b}_{2}},e \right),\text{ }{{b}_{4}}=\left( {{b}_{3}},\,\,{{b}_{3}} \right),\]
where $ \sigma =(1,2)$.
The minimal generating set for $G{{'}_{4}}$ can be presented in form of wreath recursion:  \[{{a}_{2}}=(\sigma ,\,\sigma ),\text{ }{{a}_{3}}=(e,{{a}_{2}}),{{a}_{4}}=\left( {{a}_{3}},{{a}_{3}} \right),~\,\,{{b}_{3}}=(e,{{b}_{2}}),{{b}_{4}}=({{b}_{3}},{{b}_{3}}).\]
Where $\sigma,\,{{a}_{3}},{{a}_{4}}$ generators of the first multiplier ${{G}_{3}}$ and $\sigma, {{b}_{3}},{{b}_{4}}$ generators of the second. 


\section{Conclusion }

The size of minimal generating set for commutator of Sylow 2-subgroup of alternating group $A_{2^k}$ was proven is equal to $2k-3$.

A new approach to presentation of Sylow 2-subgroups of alternating group ${A_{{2^{k}}}}$ was applied. As a result the short proof of a fact that commutator width of Sylow 2-subgroups of alternating group ${A_{{2^{k}}}}$, permutation group ${S_{{2^{k}}}}$ and Sylow $p$-subgroups of $Syl_2 A_{p^k}$ ($Syl_2 S_{p^k}$) are equal to 1 was obtained.
 Commutator width of permutational wreath product $B \wr C_n$ were investigated.


\end{document}